\newtheorem{thm}{Theorem}[section]
\newtheorem{lem}{Lemma}[section]
\newtheorem{rem}{Remark}[section]
\newcommand{\goto}{\rightarrow}
\def\<{\left<}\def\>{\right>}
\def\({\left(}\def\){\right)}
\begin{document}

\title{A draw-down  reflected spectrally negative L\'{e}vy   process}
\author{Wenyuan Wang and  Xiaowen Zhou}

\address{Wenyuan Wang: School of Mathematical Sciences, Xiamen University, Fujian 361005, People's Republic of China;
Email address: wwywang@xmu.edu.cn.
Xiaowen Zhou: Department of Mathematics and Statistics, Concordia University, Montreal, Quebec, Canada H3G 1M8; Email address:  xiaowen.zhou@concordia.ca.}

%\email{libo@nankai.edu.cn, linh_vn12@yahoo.com and xiaowen.zhou@concordia.ca}
\subjclass[2000]{Primary: 60G51; Secondary: 60E10, 60J35}
\keywords{Spectrally negative L\'{e}vy process, reflected process, draw-down time, potential measure, excursion theory, risk process, capital injection.}

\begin{abstract}
In this paper we study  a spectrally negative L\'{e}vy process that is reflected at its draw-down level whenever  a draw-down time  from the running supremum arrives. Using an excursion-theoretical approach, for such a reflected process we find the Laplace transform of the upper exiting time  and an expression of the associated potential measure.   When the reflected process is identified as a risk process with capital injections,
the expected total amount of discounted capital injections prior to the  exiting time and the Laplace transform of the accumulated capital injections until the  exiting time  are also obtained. The results are expressed in terms of scale functions for spectrally negative L\'evy processes.
\end{abstract}

\maketitle

\section{Introduction }
\setcounter{section}{1}

The first passage problems have been studied extensively for spectrally negative L\'evy processes in recent years. Such problems often concern the Laplace transforms for quantities associated to the exit times, the potential measures and the weighted occupation times. Using Wiener-Hopf factorization and excursion theory,  those Laplace transforms can often be expressed semi-explicitly  using scale functions for the spectrally negative L\'evy processes. We refer to Bertoin (1996) and Kyprianou (2006) and references therein for results along this line.

 A draw-down time is the first downward passage time from a level that depends on the previous supremum of the process via the so called draw-down function. It was first introduced and studied for diffusion processes in Lehoczky (1977).  More recently, progress has been made in investigating the general draw-down times for spectrally negative L\'evy processes. In Avram,  Vu and Zhou (2019)  the linear draw-down time related two-sided exit problem  is solved for a spectrally negative L\'evy type risk process. More fluctuation results for general draw-down times are obtained via excursion theory arguments in Li, Vu and Zhou (2019). A draw-down time based dividend optimization is further considered in Wang and Zhou (2018).

The  spectrally negative L\'evy process reflected either from its running supremum or from its running infimum frequently appears in a wide
variety of applications, such as the study of the water level for a dam, the queueing theory (cf., Asmussen (1989), Borovkov (1976) and Prabhu (1997)), the optimal stopping problems (cf., Baurdoux and Kyprianou (2008) and Shepp and Shiryaev (1994)) and the optimal control problems (cf., Avram et al. (2007), De Finetti (1957) and Gerber (1990)) for L\'evy risk processes. We refer to Pistorius (2004, 2007), Zhou (2007) and Kyprianou (2006) and references therein for a collection of results on reflected spectrally negative L\'evy processes.

Given the previous results on the draw-down times and on the reflected processes for spectrally negative L\'evy processes, it is natural to introduce draw-down related reflected processes. The main purpose of this paper is to propose a new process that is obtained by reflecting the spectrally negative L\'evy process from consecutive draw-down levels. Intuitively, given a draw-down function this process first evolves like a spectrally negative L\'evy process until immediately before the draw-down time when it is to down-cross the associated draw-down level.   After this moment, it starts to evolve according to a spectrally negative L\'evy process reflected at the draw-down level until it comes back to its  historical high, after which time the process repeats the previous behavior   all over again for the updated draw-down times and draw-down levels.     It naturally generalizes the classical reflected process from the running infimum process  to the reflected process whose reflecting levels depend on the previous supremums of the process.
In this paper we first develop some new fluctuation identities for the draw-down reflected process, which generalize those for the spectrally negative L\'evy process reflected at its infimum.
%In this paper we first obtain a Laplace transform of an upward exit time and an expression of a resolvent density for the draw-down reflected process.

Spectrally negative L\'evy processes are often used in risk theory to model the surplus processes. The reflected process at the infimum has the interpretation of a surplus process with capital injections;  see  Dickson and  Waters (2004) and Avram et al. (2007) for some earlier work on risk models with capital injections.
The draw-down time can be treated as a generalized ruin time that depends on the historical high of the surplus.
The draw-down reflected process can thus be identified as a L\'evy risk model with capital injections  to keep the surplus above the respective draw-down levels so that the net drops of the surplus from its historical highs are kept within certain ranges that can also depend on the historical highs. In this paper we also carry out capital injections related computations that are interesting in risk theory.
%In this paper we also find an expression of the expected total amount of discounted capital injections  up to the  exiting time and the Laplace transform of the accumulated capital injections until the  exiting time. All of our main results are expressed in terms of scale functions for spectrally negative L\'evy processes.

Since the spectrally negative L\'evy process reflected at its running supremum  is a Markov process and the running supremum process is a version of the local time at $0$ for the reflected process, the fluctuation behaviors of the underlying L\'evy process can often be described by   the Poisson point  process of  excursions away from the running supremum. The desired results then follow from  excursion theory techniques such as compensation formulas.
Using the excursion-theoretical approach, Kyprianou and Pistorius (2003)
derived the Laplace transform of a first passage time which is the key to the evaluation of the
Russian option; Avram et al. (2004) determined the joint Laplace transform of the exit time and exit position from an interval containing the origin of the process reflected at its supremum, which is then applied to solve the optimal stopping problems associated with the pricing of Russian options and their Canadized versions; Pistorius (2004) derived the $q$-resolvent kernels for the L\'{e}vy process reflected at its supremum  killed upon leaving $[0, a]$; Pistorius (2007) solved the problem of the Lehoczky and Skorokhod embedding problem for the the spectrally negative L\'{e}vy process reflected at its supremum;
Baurdoux (2007) investigated the density of the resolvent measure of the killed L\'evy process reflected at its infimum;
Kyprianou and Zhou (2009) obtained  the Gerber-Shiu function for a generalized L\'evy risk process.

The excursion theory also plays a key role in obtaining the results of  this paper. Using the excursion approach,  some classical results on the spectrally negative L\'evy process reflected from the infimum are generalized to the process with draw-down reflection. In particular,  we obtain the Laplace transform for the upward exit time and the potential measure for such a draw-down reflected process.
 We also find expressions on the expected present value of cumulated amount of capital injections up to an upward exit time and the Laplace transform for the total amount of capital injections until the exit time for the associated L\'evy risk process. These results are expressed in terms of scale functions for the spectrally negative L\'evy process. When the  general supremum dependent draw-down time  is reduced to
the downward first passage time of a constant boundary, we are able to recover the corresponding classical results in the existing literature.

The rest of the paper is arranged as follows.
In Section 2 we first present some preliminary results concerning the spectrally negative L\'{e}vy process and  its reflection from below at a fixed  level, and  then define the general draw-down reflected spectrally negative L\'{e}vy process. The associated excursion process of excursions from the supremum is also introduced in this section.
The main results and their proofs are provided in Section 3. Some technical lemmas and discussions are also included in this section.

\section{Spectrally negative L\'{e}vy process and its reflected processes}
\setcounter{section}{2}

Write $X\equiv\{X(t);t\geq0\}$, defined on a probability space with probability  laws $\{\mathbb{P}_{x};x\in(-\infty,\infty)\}$ and natural filtration $\{\mathcal{F}_{t};t\geq0\}$, for a spectrally negative L\'{e}vy process that is not a purely increasing linear drift or the negative of a subordinator. Denote its running supremum process as $\overline{X}\equiv\{\sup\limits_{0\leq s\leq t}X(s),\,t\geq0\}$ with $\overline{X}(
0)=x$ under $\mathbb{P}_{x}$.
Given a value $a$, the process $X$ reflected from below at the level $a$ is defined as
\[X(t)-(\underline{X}(t)-a)\wedge 0, \,\, t\geq 0 \]
where $\underline{X}(t):=\inf_{0\leq s\leq t}X(s)$ with $\underline{X}(
0)=x$ under $\mathbb{P}_{x}$, denotes the running infimum process.
Let $\{Y(t), t\geq0\}$ be the process $X$ reflected from below at the level $0$ (cf., Pistorius (2004)).

%A function $\xi$ defined on $(-\infty,\infty)$ is called a general draw-down function if it is continuous and $\xi(x)<x$ for all $x\in(-\infty,\infty)$.
The  draw-down time associated to a draw-down function $\xi$ on $(-\infty,\infty)$ satisfying $\xi(x)<x, \,\, x\in(-\infty,\infty)$,  the $\xi$-draw-down time in short, is defined as
$$\tau_{\xi}\equiv \tau_{\xi}(X):=\inf\{t\geq0: X(t)<\xi(\overline{X}(t))\}$$
with the convention $\inf\emptyset:=\infty$.
 We define the process $X$ reflected at the $\xi$-draw-down time $\tau_{\xi}$ as
  \[X(t)-\mathbf{1}_{[\tau_\xi,\infty)}(t)\left(\inf_{\tau_\xi\leq s\leq t}{X}(s)- \xi(\overline{X}(\tau_\xi))\right)\wedge 0, \,\, t\geq 0, \]
where we call $\xi(\overline{X}(\tau_\xi))$ the draw-down level at the draw-down time $\tau_\xi$.

%  $X$ reflected at the $\xi$-draw-down level $\xi(\overline{X}(\tau_{\xi}))$ with the convention that the reflected process is $X$ itself if $\tau_{\xi}(X)=\infty$.

We now define the draw-down reflected process $U$ for $X$.
Intuitively, the process $U$ initially agrees with $X$ until the first draw-down time of $U$. Then it starts to evolve according to $X$ reflected at the draw-down level until the next draw-down time of $U$ when it is reflected at the draw-down level again, and so on. Then given that $U(s)=\overline{U}(s):=\sup_{0\leq t\leq s}U(t)$, the process $\{U(t);t\geq s\}$ evolves without reflection until the next draw-down time $\tau_\xi$; and given that $\overline{U}(s)> U(s)$,
%\geq \xi(\overline{U}(s))$,
the process $\{U(t); t\geq s\}$ is reflected from below at the current draw-down level $\xi(\overline{U}(s))$ until it comes back to the level  $\overline{U}(s)$. Note that the process $U$ is not a Markov process in general, but the process $(U,\overline{U})$ is Markovian.
Write $\mathbb{P}_{x,y}$ and $\mathbb{E}_{x,y}$ for the law of $(U,\overline{U})$ such that $U(0)=x$ and $\overline{U}(0)=y$. For simplicity, denote $\mathbb{P}_{x}=\mathbb{P}_{x,x}$ and $\mathbb{E}_{x}=\mathbb{E}_{x,x}$.

%is a spectrally negative L\'evy process that is kept reflected at its $\xi$-draw-down times.
%First, write $U_1$ for the process $X$ reflected at its   $\xi$-draw-down level $\xi(\overline{X}(\tau_\xi))$ and  define  $U(t):=U_1(t)$ for $t\in [0, T_1)$ where %$T_1:=\infty$ if $\tau_{\xi}=\infty$ and
%\[T_1:=\inf\{t\geq 0: U_1(t)>\overline{X}(\tau_{\xi})\}\]
%if $\tau_{\xi}<\infty$.
%Further, let $U(T_1):=U_1(T_1)$ if $T_1<\infty$.
To be more precise, define $T_0:=0$ and $U(T_0):=X(0) $.
 Suppose first that  for $n\geq 1$, $U(t)$ has been defined on $[0, T_n]$ for $T_n<\infty$, $n\geq 1$.
 Let $X_{n+1}$ be an independent copy of $X$ starting at $U(T_n)$ and $U_{n+1}$ be the process $X_{n+1}$ reflected at its $\xi$-draw-down time $\tau_{\xi}({X}_{n+1})$. If $\tau_{\xi}(X_{n+1})=\infty$, let $T_{n+1}:=\infty$, and if $\tau_{\xi}(X_{n+1})<\infty$, let
$$T_{n+1}:=T_n+\inf\{t\geq 0: U_{n+1}(t)>\overline{X}_{n+1}(\tau_{\xi}(X_{n+1})) \}, $$
 where $\overline{X}_{n+1}(t):=\sup\limits_{0\leq s\leq t}X_{n+1}(s)$. Observe that $T_{n+1}<\infty$ if $\tau_{\xi}(X_{n+1})<\infty $.
Then define
\[U(T_n+t):=U_{n+1}(t) \quad\text{for}\quad t\in [0, T_{n+1}-T_n) \quad\text{and}\quad U(T_{n+1}):=U_{n+1}(T_{n+1}-T_n)\quad\text{if}\quad T_{n+1}<\infty.\]
Suppose now that $U(t)$ has been defined on $[0, T_n=\infty)$ for $ n\geq 0$.
For convenience, let $T_{n+1}:=\infty$.
One can show that $T_n\uparrow\infty$ as $n\goto\infty$ under mild conditions on $\xi$; see Lemma \ref{lemma0}.

For the process $X$, define its first up-crossing time of level $b\in(-\infty,\infty)$ and first down-crossing time of level $c\in(-\infty,\infty)$, respectively, by
\begin{eqnarray}
\tau^{+}_{b}:=\inf\{t\geq0: X(t)>b\}\,\,\, \text{and}\,\,\, \tau_{c}^{-}:=\inf\{t\geq0: X(t)<c\}.\nonumber
\end{eqnarray}

For the processes $Y$ and $U$, their first up-crossing times of $b\in(-\infty,\infty)$ are defined respectively by
\begin{eqnarray}
\sigma^{+}_{b}:=\inf\{t\geq0: Y(t)>b\} \,\,\, \text{and} \,\,\,\kappa_{b}^{+}:=\inf\{t\geq0: U(t)>b\}.\nonumber
\end{eqnarray}

Let the Laplace exponent of $X$ be given by
\begin{eqnarray}
\psi(\theta):=\ln \mathbb{E}_{x}\left(\mathrm{e}^{\theta (X_{1}-x)}\right)=\gamma\theta+\frac{1}{2}\sigma^{2}\theta^{2}-\int_{(0,\infty)}\left(1-\mathrm{e}^{-\theta x}-\theta x\mathbf{1}_{(0,1)}(x)\right)\nu(\mathrm{d}x),\nonumber
\end{eqnarray}
where $\nu$ is  the L\'{e}vy measure satisfying $\int_{(0,\infty)}\left(1\wedge x^{2}\right)\nu(\mathrm{d}x)<\infty$.
It is known that $\psi(\theta)$ is finite for  $\theta\in[0,\infty)$ in which case it is strictly convex and infinitely differentiable.
As in Bertoin (1996), the $q$-scale functions $\{W^{(q)};q\geq0\}$ of $X$ are defined as follows. For each $q\geq0$, $W^{(q)}:\,[0,\infty)\rightarrow[0,\infty)$ is the unique strictly increasing and continuous function with Laplace transform
\begin{eqnarray}
\int_{0}^{\infty}\mathrm{e}^{-\theta x}W^{(q)}(x)\mathrm{d}x=\frac{1}{\psi(\theta)-q},\quad \mbox{for }\theta>\Phi(q),\nonumber
\end{eqnarray}
where $\Phi(q)$ is the largest solution of the equation $\psi(\theta)=q$. Further define $W^{(q)}(x)=0 $ for $x<0$, and write $W$ for the $0$-scale function $W^{(0)}$.

It is known that $W^{(q)}(0)=0 $ if and only if process $X$ has sample paths of unbounded variation.
If $X$ has sample paths of unbounded variation, or if $X$ has sample paths of bounded variation and the L\'{e}vy measure has no atoms, then the scale function $W^{(q)}$ is continuously differentiable over $(0, \infty)$.
By Loeffen (2008), if $X$ has a L\'{e}vy measure which has a completely monotone density, then $W^{(q)}$ is twice continuously differentiable over $(0, \infty)$ when $X$ is of unbounded variation.
Moreover, if process $X$ has a nontrivial Gaussian component, then $W^{(q)}$ is twice continuously differentiable over $(0, \infty)$.
The interested readers are referred to Chan et al. (2011) and Kuznetsov et al. (2012) for more detailed discussions on the smoothness of scale functions.
For results on numerical computation of the scale function, the readers are referred to Hubalek and Kyprianou (2011) and the references therein.

 Further define
\begin{eqnarray}
Z^{(q)}(x):=1+q\int_{0}^{x}W^{(q)}(z)\mathrm{d}z,\quad x\geq0,\nonumber
\end{eqnarray}
and
\begin{eqnarray}
Z^{(q)}(x,\theta):=\mathrm{e}^{\theta x}\left(1-\left(\psi(\theta)-q\right)\int_{0}^{x}\mathrm{e}^{-\theta z}W^{(q)}(z)\mathrm{d}z\right),\quad \theta\geq0, \,x\geq0,\nonumber
\end{eqnarray}
with $Z(x,\theta):=Z^{(0)}(x,\theta)$,
and
$$\overline{W}^{(q)}(x):=\int_{0}^{x}W^{(q)}(z)\mathrm{d}z,\quad q\geq0, x\geq0,$$
and
$$\overline{Z}^{(q)}(x):=\int_{0}^{x}Z^{(q)}(z)\mathrm{d}z=x+q\int_{0}^{x}\int_{0}^{z}W^{(q)}(w)\mathrm{d}w\mathrm{d}z,\quad q\geq0, x\geq0.$$

In the sequel, without loss of generality we assume $X_{1}\equiv X$. By Li et al. (2017), we have
\begin{eqnarray}\label{part1}
\mathbb{E}_{x}(\mathrm{e}^{-q\kappa_{b}^{+}}\mathbf{1}_{\{\kappa_{b}^{+}<\tau_{\xi}\}})=
\mathbb{E}_{x}\left(\mathrm{e}^{-q\tau_{b}^{+}}\mathbf{1}_{\{\tau_{b}^{+}<\tau_{\xi}\}}\right)
=\exp\left(-\int_{x}^{b}\frac{W^{(q)\prime}(\overline{\xi}\left(z\right))}
{W^{(q)}(\overline{\xi}\left(z\right))}\mathrm{d}z\right),
\end{eqnarray}
where $\overline{\xi}(z)=z-\xi(z)$.
For $x\in[0,b]$ and $q\geq0$, from Proposition 2 in Pistorius (2004) we have
\begin{eqnarray}\label{two.sid.exit.Y}
\mathbb{E}_{x}(\mathrm{e}^{-q\sigma^{+}_{b}})=\frac{Z^{(q)}(x)}{Z^{(q)}(b)}.
\end{eqnarray}

By Kyprianou (2006), the resolvent measure corresponding to $X$ is absolutely continuous with respect to the Lebesgue measure with  density given by
\begin{eqnarray}\label{h2}
\hspace{-0.3cm}&&\hspace{-0.3cm}\int_{0}^{\infty}\mathrm{e}^{-qt}\mathbb{P}_{x}(X(t)\in \mathrm{d}y;t<\tau_{c}^{-}\wedge \tau_{b}^{+})\mathrm{d}t
\nonumber\\
\hspace{-0.3cm}&=&\hspace{-0.3cm}
\left(\frac{W^{(q)}(x-c)}{W^{(q)}(b-c)}W^{(q)}(b-y)-W^{(q)}(x-y)\right)\mathbf{1}_{(c,b)}(y)\mathrm{d}y,
\end{eqnarray}
for $x\in(c, b)$.
By Pistorius (2004), the resolvent measure corresponding to $Y$ is also absolutely continuous with respect to the Lebesgue measure and has a version of density given by
\begin{eqnarray}\label{reso.meas.Y}
\hspace{-0.3cm}&&\hspace{-0.3cm}\int_{0}^{\infty}\mathrm{e}^{-qt}\mathbb{P}_{x}(Y(t)\in \mathrm{d}y,t<\sigma^{+}_{b})\mathrm{d}t
\nonumber\\
\hspace{-0.3cm}&=&\hspace{-0.3cm}\left(\frac{Z^{(q)}(x)}{Z^{(q)}(b)}W^{(q)}(b-y)-W^{(q)}(x-y)\right)
\mathbf{1}_{[0,b)}(y)\mathrm{d}y,
\end{eqnarray}
where $x\in[0, b)$.

Define the total amount of capital injections made until time $t$ for the draw-down reflected process  as
\begin{eqnarray}
R(t)\hspace{-0.3cm}&:=&\hspace{-0.3cm}
-\sum_{k=1}^{N-1}\mathbf{1}_{[T_{k-1}+\tau_{\xi}(X_k),\infty)}(t)\left(   \inf_{\tau_\xi(X_k)\leq s\leq T_k\wedge t-T_{k-1}}{X}_{k}(s)-\xi(\overline{X}_{k}(\tau_{\xi}(X_k)))\right)\wedge 0.
\nonumber
%\hspace{-0.3cm}&&\hspace{-0.3cm}
%-\left[\left(\inf_{\tau_\xi\leq s\leq t}{X}_{k}(s)-\xi(\overline{X}_{k}(\tau_{\xi})) \right)\wedge 0\right]\mathbf{1}_{[T_{n}+\tau_{\xi},T_{n+1})}(t)
%,\,t\in[T_{n},T_{n+1}), \,n\geq0,\nonumber
\end{eqnarray}
where $N:=\inf\{n: T_n=\infty\}=\inf\{n: \tau_\xi(X_{n})=\infty\}$.
%\begin{eqnarray}
%R(t)\hspace{-0.3cm}&:=&\hspace{-0.3cm}-\sum_{k=1}^{n}\left(\underline{X}_{k}(T_{k}-T_{k-1})-\xi(\overline{X}_{k}(\tau_{\xi}(X_{k})))\right)\wedge 0
%\nonumber\\
%\hspace{-0.3cm}&&\hspace{-0.3cm}
%-\left[\left(\underline{X}_{n+1}(t-T_{n})-\xi(\overline{X}_{n+1}(\tau_{\xi}(X_{n+1})))\right)\wedge 0\right]\mathbf{1}_{[T_{n}+\tau_{\xi}(X_{n+1}),T_{n+1})}(t)
%,\,t\in[T_{n},T_{n+1}), \,n\geq0,\nonumber
%\end{eqnarray}
%with $T_{0}:=0, X_{1}:=X$  and $\underline{X}_{k}$ denoting the running infimum process for $X_k$.
Then the expectation of the total discounted capital injections until $\kappa_{b}^{+}$ is defined by
\begin{eqnarray}
V_{\xi}(x;b)\hspace{-0.3cm}&:=&\hspace{-0.3cm}\mathbb{E}_{x}\left(\int_{0}^{\kappa_{b}^{+}}\mathrm{e}^{-q t}\mathrm{d}R(t)\right),\quad b\geq x,\nonumber
\end{eqnarray}
and the Laplace transform of the total non-discounted capital injection until $\kappa_{b}^{+}$ is defined by
\begin{eqnarray}
\overline{V}_\xi(x;b)
\hspace{-0.3cm}&
:=&\hspace{-0.3cm}
\mathbb{E}_{x}\left(\mathrm{e}^{-\theta R(\kappa_{b}^{+})}\right),\quad b\geq x
.\nonumber
\end{eqnarray}

We also briefly recall concepts in excursion theory for the reflected process $\{\overline{X}(t)-X(t);t\geq0\}$, and we refer to Bertoin (1996) for more details.
For $x\in(-\infty,\infty)$, the process $\{L(t):= \overline{X}(t)-x, t\geq0\}$ serves as a local time at $0$ for
the Markov process $\{\overline{X}(t)-X(t);t\geq0\}$ under $\mathbb{P}_{x}$.
Let the corresponding inverse local time be defined as
$$L^{-1}(t):=\inf\{s\geq0: L(s)>t\}=\sup\{s\geq0: L(s)\leq t\}.$$
Further let $L^{-1}(t-):=\lim\limits_{s\uparrow t}L^{-1}(s)$.
Define a Poisson point process $\{(t, e_{t}); t\geq0\}$ as
$$e_{t}(s):=X(L^{-1}(t))-X(L^{-1}(t-)+s), \,\,s\in(0,L^{-1}(t)-L^{-1}(t-)],$$
whenever the lifetime of $e_{t}$ is positive, i.e. $L^{-1}(t)-L^{-1}(t-)>0$.
Whenever $L^{-1}(t)-L^{-1}(t-)=0 $, define $e_{t}:=\Upsilon$ with $\Upsilon$ being an additional isolated point.
A result of It\^{o} states that $e$ is a Poisson point process with
characteristic measure $n$
if $\{\overline{X}(t)-X(t);t\geq0\}$ is recurrent; otherwise $\{e_{t}; t\leq L(\infty)\}$ is a Poisson point process stopped at the first excursion of infinite lifetime. Here, $n$ is a measure on the space  $\mathcal{E}$ of excursions,
i.e. the space $\mathcal{E}$  of c\`{a}dl\`{a}g functions $f$ satisfying
\begin{eqnarray}
&&f:\,(0,\zeta)\rightarrow (0,\infty)\,\quad \mbox{for some } \zeta=\zeta(f)\in(0,\infty],
\nonumber\\
&&f:\,\{\zeta\}\rightarrow (0,\infty)\,\,\,\,\,\,\quad \mbox{if } \zeta<\infty,
\nonumber
\end{eqnarray}
where $\zeta=\zeta(f)$ is the excursion length or lifetime; see Definition 6.13 of Kyprianou (2006) for the definition of $\mathcal{E}$.
Denote by $\varepsilon(\cdot)$, or $\varepsilon$ for short, a generic excursion
belonging to the space $\mathcal{E}$ of canonical excursions.
The excursion height of a canonical excursion $\varepsilon$ is
denoted by $\overline{\varepsilon}=\sup\limits_{t\in[0,\zeta]}\varepsilon(t)$. The first passage time of a canonical excursion $\varepsilon$ is defined
by
$$
\rho_{b}^{+}\equiv\rho_{b}^{+}(\varepsilon) :=\inf\{t\in[0,\zeta]: \varepsilon(t)>b\},
$$
with the convention $\inf\emptyset:=\zeta$.

Denote by $\varepsilon_{g}$
the excursion (away from $0$)  with left-end point $g$ for the reflected process $\{\overline{X}(t)-X(t);t\geq0\}$, and $\zeta_{g}$ and $\overline{\varepsilon}_{g}$ denote its lifetime and excursion height, respectively; see Section IV.4 of Bertoin (1996).

\section{Main results}
\setcounter{section}{3}

In this section we present several results concerning the general draw-down reflected process  $U$. Recall $\overline{\xi}(x)=x-\xi(x)$. We first give the following Lemma \ref{lemma0}  guaranteeing  the well-definedness  of the process $U$.

\vspace{0.2cm}
\begin{lem}\label{lemma0}
Given $x\in(-\infty,\infty)$, if $\overline{\xi}$ is bounded from below on $[x, \infty)$, i.e.
$$\alpha:=\inf\limits_{y\in[x,\infty)}\overline{\xi}(y)>0,$$
we have $\mathbb{P}_{x}\left(\lim\limits_{n\rightarrow\infty}T_{n}=\infty\right)=1$.
\end{lem}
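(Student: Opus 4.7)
The plan is to bound each time increment $T_{n}-T_{n-1}$ from below by a ``recovery time'' $R_n$, namely the time the reflected portion of the $n$-th piece takes to climb from its draw-down level back up to its current historical high, and then to show that these $R_n$'s stay bounded away from $0$ often enough to force $\sum_n R_n=\infty$ almost surely.

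If $\tau_{\xi}(X_k)=\infty$ for some $k$, then $T_k=\infty$ by construction and the conclusion is trivial, so I may assume $\tau_{\xi}(X_n)<\infty$ for every $n$. Set $s_n:=\overline{X}_n(\tau_{\xi}(X_n))$ and $R_n:=T_n-T_{n-1}-\tau_{\xi}(X_n)\geq 0$. Because the supremum of $X_n$ starts at $U(T_{n-1})=s_{n-1}$, one has $s_n\geq s_{n-1}\geq x$, so the hypothesis yields $\overline{\xi}(s_n)\geq\alpha$. At the instant $T_{n-1}+\tau_{\xi}(X_n)$ the process $U_n$ is pushed up to the level $\xi(s_n)$; the strong Markov property of $X_n$ at $\tau_{\xi}(X_n)$ together with the spatial homogeneity of $X$ then implies that, conditionally on the past and on $s_n$, $R_n$ has the same distribution as $\sigma^{+}_{\overline{\xi}(s_n)}$ computed for an independent copy of $Y$ under $\mathbb{P}_{0}$. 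Since $a\mapsto\sigma^{+}_{a}$ is pathwise non-decreasing, $R_n$ stochastically dominates $\sigma^{+}_{\alpha}$ under $\mathbb{P}_{0}$.

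Taking $x=0$ and $b=\alpha$ in (\ref{two.sid.exit.Y}) gives $\mathbb{E}_{0}(\mathrm{e}^{-q\sigma^{+}_{\alpha}})=1/Z^{(q)}(\alpha)$, which vanishes as $q\to\infty$; hence $\sigma^{+}_{\alpha}>0$ almost surely under $\mathbb{P}_{0}$, and one can pick $\delta>0$ small enough so that $p:=\mathbb{P}_{0}(\sigma^{+}_{\alpha}>\delta)\geq \tfrac{1}{2}$. Letting $\mathcal{G}_n$ denote the natural filtration up to time $T_{n-1}+\tau_{\xi}(X_n)$, it follows that $\mathbb{P}(R_n>\delta\mid\mathcal{G}_n)\geq p$ for every $n$. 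The conditional Borel--Cantelli lemma then guarantees that $\{R_n>\delta\}$ occurs for infinitely many $n$ almost surely, so $\sum_n R_n=\infty$ and thus $T_n\geq\sum_{k=1}^{n}R_k\to\infty$.

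The main obstacle is the bookkeeping in the second paragraph: one must justify carefully that, conditionally on the past and on $s_n$, $R_n$ really has the law of $\sigma^{+}_{\overline{\xi}(s_n)}$ for a fresh copy of $Y$. The cleanest route is to apply the strong Markov property of $X_n$ at the draw-down time and then use spatial homogeneity to translate the reflection level $\xi(s_n)$ down to $0$. The remaining ingredients, namely the pathwise monotonicity of $\sigma^{+}_{a}$ in $a$ and the fact that $\sigma^{+}_{\alpha}>0$ $\mathbb{P}_{0}$-almost surely (which follows either from right-continuity of paths at $0$ or directly from the Laplace transform identity above), are standard.
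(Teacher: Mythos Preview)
Your argument is correct. Both your proof and the paper's rest on the same conditional-law identification: given $\mathcal{F}_{T_{n-1}+\tau_\xi(X_n)}$, the recovery time $R_n=T_n-T_{n-1}-\tau_\xi(X_n)$ has the law of $\sigma^{+}_{\overline{\xi}(s_n)}$ under $\mathbb{P}_0$, hence stochastically dominates $\sigma^{+}_{\alpha}$. From this common starting point the two proofs diverge. The paper simply multiplies Laplace transforms: bounding $\mathrm{e}^{-q\tau_\xi}\leq 1$ and $Z^{(q)}(\overline{\xi}(s_n))\geq Z^{(q)}(\alpha)$ gives $\mathbb{E}_x(\mathrm{e}^{-qT_n})\leq (Z^{(q)}(\alpha))^{-n}\to 0$, which forces $T_n\to\infty$ almost surely since the sequence is increasing. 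Your route instead extracts a uniform lower bound $\mathbb{P}(R_n>\delta\mid\mathcal{G}_n)\geq p>0$ and invokes the conditional Borel--Cantelli lemma. The paper's approach is a bit shorter and avoids choosing $\delta$ or appealing to Borel--Cantelli, while yours is more overtly probabilistic and makes the mechanism (infinitely many recovery times of size at least $\delta$) transparent. A minor indexing quibble: your claim $U(T_{n-1})=s_{n-1}$ needs $n\geq 2$; for $n=1$ one has $U(T_0)=x$, but the inequality $s_n\geq x$ you actually use still holds.
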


\begin{proof}[Proof:]\,\,\,For $q\in(0,\infty)$, by the strong Markov property of $(U,\overline{U})$ and \eqref{two.sid.exit.Y}, one gets
\begin{eqnarray}
\label{T1}
\mathbb{E}_{z}\left(\mathrm{e}^{-q T_{1}}\right)
\hspace{-0.3cm}&=&\hspace{-0.3cm}
\mathbb{E}_{z}\left(\mathrm{e}^{-q \tau_{\xi}}\mathbf{1}_{\{\tau_{\xi}<\infty\}}\,\mathbb{E}_{z}\left(\left.\mathrm{e}^{-q \,(T_{1}-\tau_{\xi})}\right|\mathcal{F}_{\tau_{\xi}}\right)\right)
\nonumber\\
\hspace{-0.3cm}&=&\hspace{-0.3cm}
\mathbb{E}_{z}\left(\mathrm{e}^{-q \tau_{\xi}}\mathbf{1}_{\{\tau_{\xi}<\infty\}}\,
\left[\left.\mathbb{E}_{}\left(\mathrm{e}^{-q \sigma_{z}^{+}}\right)\right|_{z=\overline{\xi}\left(\overline{X}(\tau_{\xi})\right)}\right]\right)
\nonumber\\
\hspace{-0.3cm}&=&\hspace{-0.3cm}
\mathbb{E}_{z}\left(\frac{\mathrm{e}^{-q \tau_{\xi}}\mathbf{1}_{\{\tau_{\xi}<\infty\}}}{Z^{(q)}\left(\overline{\xi}\left(\overline{X}(\tau_{\xi})\right)\right)}
\right)
\nonumber\\
\hspace{-0.3cm}&\leq&\hspace{-0.3cm}
\frac{1}{Z^{(q)}\left(\alpha\right)},\quad z\in[x,\infty),
\end{eqnarray}
where, $\overline{X}(\tau_{\xi})\geq z\geq x$ and $\mathrm{e}^{-q \tau_{\xi}}\leq 1$\, $\mathbb{P}_{z}$-a.s. for $\tau_\xi<\infty$, the definition of $\alpha$ and the increasing property of $Z^{(q)}$ are used in the inequality. Hence, by \eqref{T1} and $U(T_{n-1})\geq x$\, $\mathbb{P}_{x}$-a.s. for $T_{n-1}<\infty$, one has
\begin{eqnarray}\label{Tn}
\mathbb{E}_{x}\left(\mathrm{e}^{-q T_{n}}\right)
\hspace{-0.3cm}&=&\hspace{-0.3cm}
\mathbb{E}_{x}\left(\mathrm{e}^{-q T_{n-1}}\mathbf{1}_{\{T_{n-1}<\infty\}}\,\mathbb{E}_{x}\left(\left.\mathrm{e}^{-q \,(T_{n}-T_{n-1})}\right|\mathcal{F}_{T_{n-1}}\right)\right)
\nonumber\\
\hspace{-0.3cm}&=&\hspace{-0.3cm}
\mathbb{E}_{x}\left(\mathrm{e}^{-q T_{n-1}}\mathbf{1}_{\{T_{n-1}<\infty\}}\,\mathbb{E}_{U(T_{n-1})}\left(\mathrm{e}^{-q T_{1}}\right)
\right)
\nonumber\\
\hspace{-0.3cm}&\leq&\hspace{-0.3cm}
\frac{\mathbb{E}_{x}\left(\mathrm{e}^{-q T_{n-1}}\right)}{Z^{(q)}\left(\alpha\right)},\quad n\geq 2.
\end{eqnarray}
By recursively using \eqref{T1} and \eqref{Tn}, one can derive
\begin{eqnarray}\label{}
\mathbb{E}_{x}\left(\mathrm{e}^{-q T_{n}}\right)
\hspace{-0.3cm}&\leq&\hspace{-0.3cm}
\left(\frac{1}{Z^{(q)}\left(\alpha\right)}\right)^{n},\quad n\geq 1.\nonumber
\end{eqnarray}
Thus, we have
\begin{eqnarray}\label{}
\lim_{n\rightarrow\infty}\mathbb{E}_{x}\left(\mathrm{e}^{-q T_{n}}\right)
\hspace{-0.3cm}&=&\hspace{-0.3cm}
0.\nonumber
\end{eqnarray}
Then $\lim_{n\rightarrow\infty}T_{n}=\infty$\, $\mathbb{P}_{x}$-a.s. since $T_n$ is increasing in $n$.
\end{proof}

\vspace{0.2cm}
In preparation for the proofs of Theorems \ref{3.1}-\ref{3.4} in the sequel, we need the following lemma
whose proof is similar to that of Proposition 3.1 in Li et al. (2019) and is  omitted.
\begin{lem}\label{lemma2}
For $\theta,\,q>0$, $x\leq b$ and measurable function $\phi:\,(-\infty,\infty)\rightarrow(-\infty,\infty)$, we have
\begin{eqnarray}\label{10}
\hspace{-0.3cm}&&\hspace{-0.3cm}
\mathbb{E}_{x}\left(\mathrm{e}^{-q \tau_{\xi}}\,\mathrm{e}^{\theta X(\tau_{\xi})}\,\phi\left(\overline{X}(\tau_{\xi})\right); \tau_{\xi}<\tau_{b}^{+}\right)
\nonumber\\
\hspace{-0.3cm}&=&\hspace{-0.3cm}
\int_{x}^{b}\phi\left(s\right)\mathrm{e}^{\theta \xi(s)}
\exp\left(-\int_{x}^{s}\frac{W^{(q)\prime}(\overline{\xi}\left(z\right))}
{W^{(q)}(\overline{\xi}\left(z\right))}\mathrm{d}z\right)
\nonumber\\
\hspace{-0.3cm}&&\hspace{-0.3cm}
\times\left(\frac{W^{(q)\prime}(\overline{\xi}(s))}{W^{(q)}(\overline{\xi}(s))}Z^{(q)}(\overline{\xi}(s),\theta )-\theta  Z^{(q)}(\overline{\xi}(s),\theta )-(q-\psi(\theta ))W^{(q)}(\overline{\xi}(s))\right)
\mathrm{d}s.
\end{eqnarray}
In particular, we have
\begin{eqnarray}\label{12}
\hspace{-0.3cm}&&\hspace{-0.3cm}
\mathbb{E}_{x}\left(\mathrm{e}^{-q \tau_{\xi}}\,\phi\left(\overline{X}(\tau_{\xi})\right); \tau_{\xi}<\tau_{b}^{+}\right)
=
\int_{x}^{b}\phi\left(s\right)
\exp\left(-\int_{x}^{s}\frac{W^{(q)\prime}(\overline{\xi}\left(z\right))}
{W^{(q)}(\overline{\xi}\left(z\right))}\mathrm{d}z\right)
\nonumber\\
\hspace{-0.3cm}&&\hspace{3.8cm}
\times\left(\frac{W^{(q)\prime}(\overline{\xi}(s))}{W^{(q)}(\overline{\xi}(s))}Z^{(q)}(\overline{\xi}(s))-qW^{(q)}(\overline{\xi}(s))\right)
\mathrm{d}s
,
\end{eqnarray}
and
\begin{eqnarray}\label{13}
\hspace{-0.3cm}&&\hspace{-0.3cm}
\mathbb{E}_{x}\left(\mathrm{e}^{\theta X(\tau_{\xi})}\,\phi\left(\overline{X}(\tau_{\xi})\right); \tau_{\xi}<\tau_{b}^{+}\right)
=
\int_{x}^{b}\phi\left(s\right)\mathrm{e}^{\theta \xi(s)}
\exp\left(-\int_{x}^{s}\frac{W^{\prime}(\overline{\xi}\left(z\right))}
{W^{}(\overline{\xi}\left(z\right))}\mathrm{d}z\right)
\nonumber\\
\hspace{-0.3cm}&&\hspace{3cm}
\times\left(\frac{W^{\prime}(\overline{\xi}(s))}{W^{}(\overline{\xi}(s))}Z^{}(\overline{\xi}(s),\theta )-\theta  Z^{}(\overline{\xi}(s),\theta )+\psi(\theta )W^{}(\overline{\xi}(s))\right)
\mathrm{d}s
,
\end{eqnarray}
and
\begin{eqnarray}\label{14}
\hspace{-0.3cm}&&\hspace{-0.3cm}
\mathbb{E}_{x}\left(\mathrm{e}^{-q \tau_{\xi}}\left(\xi\left(\overline{X}(\tau_{\xi})\right)-X(\tau_{\xi})\right); \tau_{\xi}<\tau_{b}^{+}\right)
=\int_{x}^{b}
\exp\left(-\int_{x}^{s}\frac{W^{(q)\prime}(\overline{\xi}\left(z\right))}
{W^{(q)}(\overline{\xi}\left(z\right))}\mathrm{d}z\right)
\nonumber\\
\hspace{-0.3cm}&&\hspace{0.3cm}
\times\left(Z^{(q)}(\overline{\xi}(s))-\psi^{\prime}(0+)W^{(q)}(\overline{\xi}(s))-\frac{\overline{Z}^{(q)}(\overline{\xi}(s))-\psi^{\prime}(0+)
\overline{W}^{(q)}(\overline{\xi}(s))}{W^{(q)}(\overline{\xi}(s))}W^{(q)\prime}(\overline{\xi}(s))\right)\mathrm{d}s
.
\end{eqnarray}
\end{lem}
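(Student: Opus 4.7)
The plan is to use the excursion-theoretic framework of Section 2, following the lines of Proposition 3.1 in Li et al.\ (2019). Under $\mathbb{P}_x$, the process $L(t) := \overline{X}(t) - x$ is a local time at $0$ for the reflected process $\overline{X} - X$, and the associated excursion point process $\{(s, e_s)\}$ has characteristic measure $n$. On the event $\{\tau_\xi < \tau_b^+\}$, the draw-down time $\tau_\xi$ is triggered by the first excursion, indexed by some local time $\ell^* \in (0, b - x)$, whose height $\overline{\varepsilon}_{\ell^*}$ exceeds $\overline{\xi}(x + \ell^*)$. At that instant one has $\overline{X}(\tau_\xi) = x + \ell^*$ and $X(\tau_\xi) = (x + \ell^*) - \varepsilon_{\ell^*}(\rho_{\overline{\xi}(x + \ell^*)}^{+}(\varepsilon_{\ell^*}))$.

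Applying the compensation formula and parametrizing by the supremum $s = x + \ell^*$, I will show that the left-hand side of \eqref{10} factors as $\int_x^b \phi(s) e^{\theta \xi(s)} A(s) B(s) \, ds$, where $A(s)$ is the $q$-discounted survival factor (no draw-down has occurred at local time less than $s - x$) and $B(s)$ is the contribution of the crossing excursion evaluated under the excursion measure $n$. From \eqref{part1}, the survival factor equals
\begin{equation*}
A(s) = \exp\left(-\int_x^s \frac{W^{(q)\prime}(\overline{\xi}(z))}{W^{(q)}(\overline{\xi}(z))} \, dz\right),
\end{equation*}
and for the crossing factor, writing $a = \overline{\xi}(s)$, I will evaluate
\begin{equation*}
B(s) = n\bigl[e^{-q \rho_a^{+} - \theta \varepsilon(\rho_a^{+})} \mathbf{1}_{\{\rho_a^{+} < \zeta\}}\bigr] = \frac{W^{(q)\prime}(a)}{W^{(q)}(a)} Z^{(q)}(a, \theta) - \theta Z^{(q)}(a, \theta) - (q - \psi(\theta)) W^{(q)}(a)
\end{equation*}
using standard fluctuation identities for the excursion measure of $\overline{X} - X$. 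Combining these factors yields \eqref{10}.

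The identities \eqref{12} and \eqref{13} then follow by specialization: set $\theta = 0$ in \eqref{10} (noting $Z^{(q)}(x,0) = Z^{(q)}(x)$) for \eqref{12}, and $q = 0$ for \eqref{13}. For \eqref{14}, I will write $\xi(\overline{X}(\tau_\xi)) - X(\tau_\xi)$ as a sum of a $\phi$-type term and a $\partial_\theta$-type term and combine \eqref{12} with $\phi(s) = \xi(s)$ together with the $\theta$-derivative of \eqref{10} at $\theta = 0$ with $\phi \equiv 1$, using the direct identity $\partial_\theta Z^{(q)}(x, \theta)|_{\theta = 0} = \overline{Z}^{(q)}(x) - \psi'(0+) \overline{W}^{(q)}(x)$; the $\xi(s)$-contributions cancel, and the remaining terms rearrange into the stated formula.

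The main obstacle will be pinning down the exact form of the crossing factor $B(s)$: translating the excursion-measure integral into scale functions requires the joint law of $(\rho_a^{+}, \varepsilon(\rho_a^{+}))$ under $n$, which is standard but delicate to execute carefully. Once that identity is in place, the remainder of the proof is routine algebra and differentiation on scale functions, mirroring the arguments of Li et al.\ (2019).
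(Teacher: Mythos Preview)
Your approach is exactly the one the paper intends: it omits the proof and simply refers to Proposition~3.1 of Li, Vu and Zhou (2019), which is precisely the excursion/compensation-formula argument you outline, with the survival factor $A(s)$ coming from \eqref{part1} and the crossing factor $B(s)$ coming from the joint law of $(\rho_a^+,\varepsilon(\rho_a^+))$ under $n$. One small bookkeeping slip: since $X(\tau_\xi)=s-\varepsilon(\rho_a^+)$ and $\xi(s)=s-a$, the crossing factor that matches your outer $e^{\theta\xi(s)}$ is $n\!\left[e^{-q\rho_a^{+}+\theta(a-\varepsilon(\rho_a^{+}))}\mathbf{1}_{\{\rho_a^{+}<\zeta\}}\right]$ (i.e.\ with the overshoot $a-\varepsilon(\rho_a^+)$), not $n[e^{-q\rho_a^{+}-\theta\varepsilon(\rho_a^{+})}]$; this is what actually equals the displayed scale-function expression, since $Z^{(q)}(a,\theta)$ carries the factor $e^{\theta a}$.
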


\vspace{0.2cm}
We start with the Laplace transform of the upper exiting time for the process $U$.

\vspace{0.2cm}
\begin{thm}\label{3.1}
\label{Laplace.tra.ucro.}
For $q>0$ and $x\leq b$, we have
\begin{eqnarray}
\label{upper.lower.boun.resu.}
\mathbb{E}_{x}(\mathrm{e}^{-q\kappa_{b}^{+}})=
\exp\left(-\int_{x}^{b}\frac{qW^{(q)}(\overline{\xi}(z))}{ Z^{(q)}(\overline{\xi}(z))}\mathrm{d}z\right).
\end{eqnarray}
\end{thm}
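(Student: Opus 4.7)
The plan is to set $f(x):=\mathbb{E}_{x}(e^{-q\kappa_{b}^{+}})$ for $x\le b$, derive a Volterra integral equation for $f$ by a one-step strong Markov decomposition, and then collapse that equation to a first-order linear ODE whose solution is the claimed expression.

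The first step is the decomposition. I would split according to whether $\tau_{b}^{+}<\tau_{\xi}$ or $\tau_{\xi}<\tau_{b}^{+}$. On $\{\tau_{b}^{+}<\tau_{\xi}\}$ we have $U=X$ up to $\kappa_{b}^{+}=\tau_{b}^{+}$, and the contribution $\mathbb{E}_{x}(e^{-q\tau_{b}^{+}};\tau_{b}^{+}<\tau_{\xi})$ is given in closed form by \eqref{part1}. On $\{\tau_{\xi}<\tau_{b}^{+}\}$ the draw-down occurs at a supremum level $S:=\overline{X}(\tau_{\xi})<b$; between $\tau_{\xi}$ and $T_{1}$ the process $U$ evolves as $X$ reflected at $\xi(S)$ with $\overline{U}\equiv S$, and at $T_{1}$ one has $(U(T_{1}),\overline{U}(T_{1}))=(S,S)$, so the strong Markov property of $(U,\overline{U})$ yields
\begin{equation*}
f(x)=\mathbb{E}_{x}(e^{-q\tau_{b}^{+}};\tau_{b}^{+}<\tau_{\xi})+\mathbb{E}_{x}\bigl(e^{-qT_{1}}\,f(\overline{X}(\tau_{\xi}));\tau_{\xi}<\tau_{b}^{+}\bigr).
\end{equation*}
Conditioning on $\mathcal{F}_{\tau_{\xi}}$ and identifying the reflected block with the process $Y$ started at $0$ up-crossing the level $\overline{\xi}(S)$, \eqref{two.sid.exit.Y} gives $\mathbb{E}(e^{-q(T_{1}-\tau_{\xi})}\mid\mathcal{F}_{\tau_{\xi}})=1/Z^{(q)}(\overline{\xi}(S))$, exactly as in the proof of Lemma \ref{lemma0}.

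Plugging this into the second expectation and applying \eqref{12} from Lemma \ref{lemma2} with $\phi(s)=f(s)/Z^{(q)}(\overline{\xi}(s))$, together with \eqref{part1} for the first term, produces
\begin{equation*}
f(x)=e^{-H(x,b)}+\int_{x}^{b}\frac{f(s)\,G(s)}{Z^{(q)}(\overline{\xi}(s))}\,e^{-H(x,s)}\,\mathrm{d}s,
\end{equation*}
where $H(x,s):=\int_{x}^{s}W^{(q)\prime}(\overline{\xi}(z))/W^{(q)}(\overline{\xi}(z))\,\mathrm{d}z$ and $G(s):=\frac{W^{(q)\prime}(\overline{\xi}(s))}{W^{(q)}(\overline{\xi}(s))}Z^{(q)}(\overline{\xi}(s))-qW^{(q)}(\overline{\xi}(s))$. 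Setting $F(x):=f(x)e^{H(x,b)}$ and using $H(x,b)-H(x,s)=H(s,b)$ collapses this to $F(x)=1+\int_{x}^{b}F(s)G(s)/Z^{(q)}(\overline{\xi}(s))\,\mathrm{d}s$.

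Differentiating in $x$, using the key algebraic identity
\begin{equation*}
\frac{G(s)}{Z^{(q)}(\overline{\xi}(s))}=\frac{W^{(q)\prime}(\overline{\xi}(s))}{W^{(q)}(\overline{\xi}(s))}-\frac{qW^{(q)}(\overline{\xi}(s))}{Z^{(q)}(\overline{\xi}(s))},
\end{equation*}
and integrating back with terminal condition $F(b)=1$ (equivalently $f(b)=1$, which holds since the standing assumption on $X$ forces $\tau_{b}^{+}=0$ $\mathbb{P}_{b}$-a.s.), I obtain $F(x)=e^{H(x,b)}\exp\!\bigl(-\int_{x}^{b}qW^{(q)}(\overline{\xi}(s))/Z^{(q)}(\overline{\xi}(s))\,\mathrm{d}s\bigr)$, and cancelling the $e^{H(x,b)}$ factor against $f(x)=F(x)e^{-H(x,b)}$ yields \eqref{upper.lower.boun.resu.}. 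The step I expect to require the most care is the strong Markov reduction: one must argue precisely that on the reflected block $[\tau_{\xi},T_{1}]$ the supremum $\overline{U}$ is frozen at $S$, the block is a shifted copy of $Y$, and the post-$T_{1}$ evolution is an independent copy of $(U,\overline{U})$ started at $(S,S)$. Once that is established the remainder is the algebraic reduction above, in which the $W^{(q)\prime}/W^{(q)}$ terms cancel exactly against the $H$ factors, leaving the required $qW^{(q)}/Z^{(q)}$ integrand.
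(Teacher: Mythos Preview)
Your proposal is correct and follows essentially the same route as the paper: both proofs split on $\{\tau_{b}^{+}<\tau_{\xi}\}$ versus $\{\tau_{\xi}<\tau_{b}^{+}\}$, use \eqref{part1}, \eqref{two.sid.exit.Y} and \eqref{12} to derive the same Volterra equation for $f$, and then differentiate to the first-order ODE with terminal condition $f(b)=1$. The only difference is cosmetic---you introduce the substitution $F(x)=f(x)e^{H(x,b)}$ before differentiating, whereas the paper differentiates the integral equation for $f$ directly; the resulting ODE and cancellation of the $W^{(q)\prime}/W^{(q)}$ terms are identical.
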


\begin{proof}[Proof:]\,\,\,
Denote by $f(x)$ the left hand side of (\ref{upper.lower.boun.resu.}). We have
\begin{eqnarray}\label{part.}
f(x)=\mathbb{E}_{x}\left(\mathrm{e}^{-q\kappa_{b}^{+}}\mathbf{1}_{\{\kappa_{b}^{+}<\tau_{\xi}\}}\right)
+\mathbb{E}_{x}\left(\mathrm{e}^{-q\kappa_{b}^{+}}\mathbf{1}_{\{\tau_{\xi}<\kappa_{b}^{+}\}}\right)
\end{eqnarray}
%Define
%$$\eta_{\xi}:=\inf\{t\geq \tau_{\xi}; U(t)\geq \overline{X}(\tau_{\xi})=\overline{U}(\tau_{\xi})\},$$
%which is the first passage of level $\overline{U}(\tau_{\xi})$ after the draw down time $\tau_{\xi}$.
Note that by definition,  $\tau_{\xi}<\kappa_{b}^{+}$ implies $\overline{X}(\tau_{\xi})<b$ which further implies $T_{1}<\kappa_{b}^{+}$. Hence, taking use of \eqref{two.sid.exit.Y} and \eqref{12} we get
\begin{eqnarray}\label{part2}
\hspace{-0.3cm}&&\hspace{-0.3cm}\mathbb{E}_{x}\left(\mathrm{e}^{-q\kappa_{b}^{+}}\mathbf{1}_{\{\tau_{\xi}<\kappa_{b}^{+}\}}\right)
=\mathbb{E}_{x}\left(\mathrm{e}^{-q\kappa_{b}^{+}}\mathbf{1}_{\{\tau_{\xi}<T_{1}<\kappa_{b}^{+}\}}
\right)
\nonumber\\
%\hspace{-0.3cm}&=&\hspace{-0.3cm}
%\mathbb{E}_{x}\left(\left.\mathbb{E}_{x}\left(\mathbb{E}_{x}\left(\mathrm{e}^{-q\kappa_{b}^{+}}\mathbf{1}_{\{\tau_{\xi}<T_{1}<\kappa_{b}^{+}\}}\left|\mathcal{F}_{T_{1}}\right.\right)\right|\mathcal{F}_{\tau_{\xi}}\right)\right)
%\nonumber\\
\hspace{-0.3cm}&=&\hspace{-0.3cm}
\mathbb{E}_{x}\left(\mathrm{e}^{-q\tau_{\xi}}\mathbf{1}_{\{\tau_{\xi}<\tau_{b}^{+}\}} \left[\left.\mathbb{E}_{}\left(\mathrm{e}^{-q \sigma^{+}_{z}}\right)\right|_{z=\overline{\xi}(\overline{X}(\tau_{\xi}))}\right]f(\overline{X}(\tau_{\xi}))\right)
\nonumber\\
\hspace{-0.3cm}&=&\hspace{-0.3cm}
\mathbb{E}_{x}\left(\mathrm{e}^{-q\tau_{\xi}}\mathbf{1}_{\{\tau_{\xi}<\tau_{b}^{+}\}} \frac{f(\overline{X}(\tau_{\xi}))}{Z^{(q)}(\overline{\xi}(\overline{X}(\tau_{\xi})))}\right)
\nonumber\\
\hspace{-0.3cm}&=&\hspace{-0.3cm}
\int_{x}^{b}f(s)
\exp\left(-\int_{x}^{s}\frac{W^{(q)\prime}(\overline{\xi}(z))}{ W^{(q)}(\overline{\xi}(z))}\mathrm{d}z\right)
\left(\frac{W^{(q)\prime}(\overline{\xi}(s))}{W^{(q)}(\overline{\xi}(s))}-\frac{qW^{(q)}(\overline{\xi}(s))}{Z^{(q)}(\overline{\xi}(s))}\right)\mathrm{d}s.
\end{eqnarray}
Combining (\ref{part1}), (\ref{part.}) and (\ref{part2}), we obtain
\begin{eqnarray}\label{f(x)}
f(x)\hspace{-0.3cm}&=&\hspace{-0.3cm}\exp\left(-\int_{x}^{b}\frac{W^{(q)\prime}(\overline{\xi}\left(z\right))}
{W^{(q)}(\overline{\xi}\left(z\right))}\mathrm{d}z\right)
\nonumber\\
\hspace{-0.3cm}&&\hspace{-0.3cm}
+\int_{x}^{b}f(s)
\exp\left(-\int_{x}^{s}\frac{W^{(q)\prime}(\overline{\xi}(z))}{ W^{(q)}(\overline{\xi}(z))}\mathrm{d}z\right)
\left(\frac{W^{(q)\prime}(\overline{\xi}(s))}{W^{(q)}(\overline{\xi}(s))}-\frac{qW^{(q)}(\overline{\xi}(s))}{Z^{(q)}(\overline{\xi}(s))}\right)
\mathrm{d}s,\quad x\leq b.
\end{eqnarray}
Taking derivative on both sides of (\ref{f(x)}) with respect to $x$, we have
\begin{eqnarray}\label{f'(x)}
f^{\prime}(x)\hspace{-0.3cm}&=&\hspace{-0.3cm}
\frac{W^{(q)\prime}(\overline{\xi}\left(x\right))}
{W^{(q)}(\overline{\xi}\left(x\right))}
f(x)
-f(x)\left(\frac{W^{(q)\prime}(\overline{\xi}(x))}{W^{(q)}(\overline{\xi}(x))}-\frac{qW^{(q)}(\overline{\xi}(x))}{Z^{(q)}(\overline{\xi}(x))}\right)
\nonumber\\
\hspace{-0.3cm}&=&\hspace{-0.3cm}
f(x)\frac{qW^{(q)}(\overline{\xi}(x))}{Z^{(q)}(\overline{\xi}(x))},
\quad x\leq b.
\end{eqnarray}
Solving (\ref{f'(x)}) we obtain
\begin{eqnarray}\label{fwithC}
f(x)
\hspace{-0.3cm}&=&\hspace{-0.3cm}
C+\exp\left(-\int_{x}^{b}\frac{qW^{(q)}(\overline{\xi}(z))}{Z^{(q)}(\overline{\xi}(z))}\mathrm{d}z\right),\quad x\leq b,
\end{eqnarray}
for some constant $C$.
The boundary condition $f(b)=1$ together with (\ref{fwithC}) yields (\ref{upper.lower.boun.resu.}).
\end{proof}

\begin{rem} If $\xi(x)=kx-d$ for some $k\in(-\infty,1)$ and $d\in (0,\infty)$, we have
\begin{eqnarray}
\label{}
\mathbb{E}_{x}(\mathrm{e}^{-q\kappa_{b}^{+}})
\hspace{-0.3cm}&=&\hspace{-0.3cm}
\exp\left(-\int_{x}^{b}\frac{qW^{(q)}((1-k)z+d)}{ Z^{(q)}((1-k)z+d)}\mathrm{d}z\right)
\nonumber\\
\hspace{-0.3cm}&=&\hspace{-0.3cm}
\exp\left(-\frac{1}{1-k}\int_{(1-k)x+d}^{(1-k)b+d}\frac{qW^{(q)}(z)}{ Z^{(q)}(z)}\mathrm{d}z\right)
\nonumber\\
\hspace{-0.3cm}&=&\hspace{-0.3cm}
\left(\frac{Z^{(q)}((1-k)x+d)}{ Z^{(q)}((1-k)b+d)}\right)^{\frac{1}{1-k}}.\nonumber
\end{eqnarray}
\end{rem}

\vspace{0.2cm}
We then obtain an expression of the resolvent density for the process $U$.

\vspace{0.2cm}
\begin{thm}\label{3.2}
\label{reso.meas.U.}
For $q>0$, $x\leq b$ and $u\leq b$, the resolvent measure of $U$ is absolutely continuous with respect to the Lebesgue measure with  density given by
\begin{eqnarray}
\label{resovent.meas.}
\hspace{-0.3cm}&&\hspace{-0.3cm}
\int_{0}^{\infty}\mathrm{e}^{-qt}\mathbb{P}_{x}(U(t)\in \mathrm{d}u,t<\kappa_{b}^{+})\mathrm{d}t
\nonumber\\
\hspace{-0.3cm}&=&\hspace{-0.3cm}
W^{(q)}(0)\exp\left(-\int_{x}^{u}\frac{qW^{(q)}(\overline{\xi}(z))}{ Z^{(q)}(\overline{\xi}(z))}\mathrm{d}z\right)\mathbf{1}_{(x,b)}(u)\mathrm{d}u
+\int_{x}^{b}\exp\left(-\int_{x}^{y}\frac{qW^{(q)}(\overline{\xi}(z))}{ Z^{(q)}(\overline{\xi}(z))}\mathrm{d}z\right)
\nonumber\\
\hspace{-0.3cm}&&\hspace{-0.3cm}
\times
\left(W^{(q)\prime}(y-u)-\frac{qW^{(q)}(\overline{\xi}(y))}{Z^{(q)}(\overline{\xi}(y))}W^{(q)}(y-u)\right)
\mathbf{1}_{(\xi(y),y)}(u)\mathrm{d}y  \mathrm{d}u.
\end{eqnarray}
\end{thm}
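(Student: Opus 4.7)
The plan is to mirror the strategy used in the proof of Theorem \ref{3.1}: decompose the resolvent at the first draw-down time $\tau_{\xi}$, apply the strong Markov property together with Lemma \ref{lemma2} to obtain a Volterra-type integral equation for the density as a function of the starting point $x$, and then solve it by reducing to a first-order linear ODE in $x$. I write $r(x,u)$ for the density on the left-hand side of \eqref{resovent.meas.}, viewed as a function of $x$ with $u$ and $b$ fixed.

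First I would split $r(x,u)=r_{0}(x,u)+r_{1}(x,u)$ according to whether the time $t$ precedes $\tau_{\xi}$ or not. The term $r_{0}(x,u)$ is the resolvent density of $X$ killed at $\tau_{\xi}\wedge\tau_{b}^{+}$, which I would derive by excursion theory applied to $\overline{X}-X$: the portion on which $X(t)=\overline{X}(t)$ contributes the creeping coefficient $W^{(q)}(0)$ times the up-crossing Laplace transform $\exp(-\int_{x}^{u}W^{(q)\prime}(\overline{\xi}(z))/W^{(q)}(\overline{\xi}(z))\,\mathrm{d}z)$ from \eqref{part1}, while the portion lying strictly below the supremum at level $y$ contributes a standard excursion-theoretic kernel in $y-u$. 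For $r_{1}(x,u)$, the strong Markov property of $(U,\overline{U})$ at $\tau_{\xi}$, together with the identity $U(\tau_{\xi})=\xi(\overline{X}(\tau_{\xi}))$ and the fact that on $[\tau_{\xi},T_{1}]$ the shifted process $U-\xi(\overline{X}(\tau_{\xi}))$ is an independent copy of $Y$ started at $0$, gives
\begin{align*}
r_{1}(x,u)=\mathbb{E}_{x}\!\left[\mathrm{e}^{-q\tau_{\xi}}\mathbf{1}_{\{\tau_{\xi}<\tau_{b}^{+}\}}\,\frac{W^{(q)}(\overline{X}(\tau_{\xi})-u)\,\mathbf{1}_{(\xi(\overline{X}(\tau_{\xi})),\,\overline{X}(\tau_{\xi}))}(u)+r(\overline{X}(\tau_{\xi}),u)}{Z^{(q)}(\overline{\xi}(\overline{X}(\tau_{\xi})))}\right],
\end{align*}
after invoking \eqref{reso.meas.Y} for the reflected resolvent and \eqref{two.sid.exit.Y} for the up-crossing Laplace transform of the reflection phase, and then restarting by strong Markov at $T_{1}$.

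Applying identity \eqref{12} in Lemma \ref{lemma2} turns $r_{1}(x,u)$ into an explicit integral over $y=\overline{X}(\tau_{\xi})\in(x,b)$, producing a linear Volterra-type integral equation for $r(\cdot,u)$ on $[x,b]$. Differentiating this equation in $x$, exactly as in the passage from \eqref{f(x)} to \eqref{f'(x)}, cancels the $W^{(q)\prime}(\overline{\xi}(x))/W^{(q)}(\overline{\xi}(x))$ contributions and yields a first-order linear ODE
\begin{align*}
\partial_{x}r(x,u)=\frac{qW^{(q)}(\overline{\xi}(x))}{Z^{(q)}(\overline{\xi}(x))}\,r(x,u)+h(x,u),
\end{align*}
with an explicit inhomogeneity $h(x,u)$ coming from $r_{0}(x,u)$ and from the boundary term $\mathbf{1}_{(\xi(x),x)}(u)W^{(q)}(x-u)[W^{(q)\prime}(\overline{\xi}(x))/W^{(q)}(\overline{\xi}(x))-qW^{(q)}(\overline{\xi}(x))/Z^{(q)}(\overline{\xi}(x))]$. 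Solving with the integrating factor $\exp(-\int_{x}^{b}qW^{(q)}(\overline{\xi}(z))/Z^{(q)}(\overline{\xi}(z))\,\mathrm{d}z)$ and the boundary condition $r(b,u)=0$ (which holds because $\kappa_{b}^{+}=0$ $\mathbb{P}_{b}$-a.s., since $X$ is neither a pure positive drift nor the negative of a subordinator) then matches \eqref{resovent.meas.}.

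The main obstacle will be deriving $r_{0}$ correctly and propagating the creeping factor $W^{(q)}(0)$ through to the final answer. The bounded- versus unbounded-variation dichotomy ($W^{(q)}(0)=0$ in the unbounded case) makes this term delicate but non-zero exactly when $X$ has sample paths of bounded variation, in parallel with the structure of the classical resolvents \eqref{h2} and \eqref{reso.meas.Y}. A secondary point is verifying uniqueness of the solution to the Volterra equation, which follows from a routine Gronwall estimate given continuity of the coefficients on $[x,b]$.
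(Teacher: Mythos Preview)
Your proposal is correct and follows essentially the same route as the paper. The paper works with $g(x)=\int_{0}^{\infty}\mathrm{e}^{-qt}\mathbb{E}_{x}(h(U(t));t<\kappa_{b}^{+})\mathrm{d}t$ for a test function $h$ and splits it into four pieces $g_{1}+g_{2}+g_{3}+g_{4}$ according to whether $e_{q}\lessgtr\tau_{\xi}$ and whether $U(e_{q})$ is at or below its running supremum, but the ingredients are exactly those you list: excursion theory (compensation formula) for the pre-$\tau_{\xi}$ resolvent $r_{0}$, strong Markov at $\tau_{\xi}$ and at $T_{1}$ together with \eqref{two.sid.exit.Y} and \eqref{reso.meas.Y} for the reflection phase, identity \eqref{12} from Lemma \ref{lemma2} to integrate over $\overline{X}(\tau_{\xi})$, then differentiation in $x$ to collapse the Volterra equation into the first-order ODE you wrote, solved with $g(b)=0$. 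The only step the paper makes more explicit than your sketch is the derivation of the off-diagonal part of $r_{0}$: rather than citing it as ``a standard excursion-theoretic kernel'', the paper obtains the identity $\int_{0}^{\infty}n(\mathrm{e}^{-qs}h(b-\varepsilon(s))\mathbf{1}_{\{s<\zeta\wedge\rho^{+}_{b-c}\}})\mathrm{d}s=\int_{c}^{b}h(y)\big(W^{(q)\prime}(b-y)-\tfrac{W^{(q)\prime}(b-c)}{W^{(q)}(b-c)}W^{(q)}(b-y)\big)\mathrm{d}y$ by differentiating the known two-sided resolvent \eqref{h2} in $b$, which then feeds directly into the compensation-formula expression for $g_{1}$.
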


\begin{proof}[Proof:]\,\,\,
Recall $\overline{U}(t)=\sup_{s\in[0,t]}U(s)$ and let $e_{q}$ be an exponential  random variable independent of $X$.
%Denote by $g(x)$ the left hand side of (\ref{resovent.meas.}).
For $q>0$, $x\leq b$ and any continuous, non-negative and bounded function $h$, let
\begin{eqnarray}
\label{gene.reso.meas.}
qg(x)\hspace{-0.3cm}&:=&\hspace{-0.3cm}\int_{0}^{\infty}q \mathrm{e}^{-qt}\mathbb{E}_{x}(h(U(t));t<\kappa_{b}^{+})\mathrm{d}t
\nonumber\\
%\hspace{-0.3cm}&=&\hspace{-0.3cm}
%\mathbb{E}_{x}\left(\int_{0}^{\infty}q \mathrm{e}^{-qt}h(U(t))\mathbf{1}_{\{U(t)<\overline{U}(t),\,t<\kappa_{b}^{+}\wedge \tau_{\xi}\}}\mathrm{d}t\right)
%+\mathbb{E}_{x}\left(\int_{0}^{\infty}q \mathrm{e}^{-qt}h(U(t))\mathbf{1}_{\{U(t)<\overline{U}(t),\tau_{\xi}<t<\kappa_{b}^{+}\}}\mathrm{d}t\right)
%\nonumber\\
%\hspace{-0.3cm}&&\hspace{-0.3cm}
%+\mathbb{E}_{x}\left(\int_{0}^{\infty}q \mathrm{e}^{-qt}h(U(t))\mathbf{1}_{\{U(t)=\overline{U}(t),\,t<\kappa_{b}^{+}\wedge \tau_{\xi}\}}\mathrm{d}t\right)
%+\mathbb{E}_{x}\left(\int_{0}^{\infty}q \mathrm{e}^{-qt}h(U(t))\mathbf{1}_{\{U(t)=\overline{U}(t),\tau_{\xi}<t<\kappa_{b}^{+}\}}\mathrm{d}t\right)
%\nonumber\\
\hspace{-0.3cm}&=&\hspace{-0.3cm}
\mathbb{E}_{x}\left(h(X(e_{q}))\mathbf{1}_{\{X(e_{q})<\overline{X}(e_{q}),\,e_{q}<\tau_{b}^{+}\wedge \tau_{\xi}\}}\right)
+\mathbb{E}_{x}\left(h(U(e_{q}))\mathbf{1}_{\{U(e_{q})<\overline{U}(e_{q}),\tau_{\xi}<e_{q}<\kappa_{b}^{+}\}}\right)
\nonumber\\
\hspace{-0.3cm}&&\hspace{-0.3cm}
+\mathbb{E}_{x}\left(\int_{0}^{\infty}q \mathrm{e}^{-qt}h(X(t))\mathbf{1}_{\{X(t)=\overline{X}(t),\,t<\tau_{b}^{+}\wedge \tau_{\xi}\}}\mathrm{d}t\right)
+\mathbb{E}_{x}\left(h(U(e_{q}))\mathbf{1}_{\{U(e_{q})=\overline{U}(e_{q}),\tau_{\xi}<e_{q}<\kappa_{b}^{+}\}}\right)
\nonumber\\
\hspace{-0.3cm}&:=&\hspace{-0.3cm}\,
qg_{1}(x)+qg_{2}(x)+qg_{3}(x)+qg_{4}(x).\nonumber
\end{eqnarray}

%From Bertoin (1996), Pistorius (2004) or Li et al. (2017)
%, it is well known that, if $X$ has sample paths of bounded variation, the time up to time $t$ that the process $\{\overline{X}(t)-X(t)\}_{t\geq0}$ has spent in zero is
%equal to $\overline{X}(t)$ divided by the drift $\beta:=\gamma+\int_{0}^{1}x\upsilon(\mathrm{d}x)=1/W^{(q)}(0)\in(0,\infty)$; if $X$ has unbounded variation, the time up to time $t$ that the process $\{\overline{X}(t)-X(t)\}_{t\geq0}$ has spent in zero is
%equal to $0\,(=W^{(q)}(0))$. That is to say, we always have $\int_{0}^{t}\mathbf{1}_{\{X(s)=\overline{X}(s)\}}\mathrm{d}s=W^{(q)}(0) \,\overline{X}(t)$.

Note that $\int_{0}^{t}\mathbf{1}_{\{X(s)=\overline{X}(s)\}}\mathrm{d}s=W^{(q)}(0) \,\overline{X}(t)$ under $\mathbb{P}_{0}$, see Chapters IV and VII of Bertoin (1996), the proof of Part (ii) of Theorem 1 in Pistorius (2004) or the first three paragraphs in Section 5 of Li et al. (2019).
By \eqref{part1} we have
\begin{eqnarray}\label{qg3}
qg_{3}(x)\hspace{-0.3cm}&=&\hspace{-0.3cm}\mathbb{E}_{x}\left(\int_{0}^{\infty}q \mathrm{e}^{-qL^{-1}(L(t))}h(X(L^{-1}(L(t))))\mathbf{1}_{\{X(t)=\overline{X}(t),\,L^{-1}(L(t))<\tau_{b}^{+}\wedge \tau_{\xi}\}}\mathrm{d}t\right)
\nonumber\\
%\hspace{-0.3cm}&=&\hspace{-0.3cm}
%\mathbb{E}_{x}\left(\int_{0}^{\infty}q \mathrm{e}^{-qL^{-1}(L(t))}h(X(L^{-1}(L(t))))\mathbf{1}_{\{L^{-1}(L(t))<\tau_{b}^{+}\wedge \tau_{\xi}\}}\mathrm{d}\left(\int_{0}^{t}\mathbf{1}_{\{X(s)=\overline{X}(s)\}}\mathrm{d}s\right)\right)
%\nonumber\\
\hspace{-0.3cm}&=&\hspace{-0.3cm}
W(0)\mathbb{E}_{x}\left(\int_{0}^{\infty}q \mathrm{e}^{-qL^{-1}(L(t))}h(X(L^{-1}(L(t))))\mathbf{1}_{\{L^{-1}(L(t))< \tau_{b}^{+}\wedge \tau_{\xi}\}}\mathrm{d}L_{t}\right)
\nonumber\\
%\hspace{-0.3cm}&=&\hspace{-0.3cm}
%W(0)\mathbb{E}_{x}\left(\int_{0}^{b-x}q \mathrm{e}^{-qL^{-1}_{t}}h(X(L^{-1}_{t}))\mathbf{1}_{\{L^{-1}_{t}< \tau_{\xi}\}}\mathrm{d}t\right)
%\nonumber\\
\hspace{-0.3cm}&=&\hspace{-0.3cm}
q W(0)\int_{0}^{b-x} \mathbb{E}_{x}\left(\mathrm{e}^{-qL^{-1}(t)}\mathbf{1}_{\{L^{-1}(t)< \tau_{\xi}\}}\right)h(x+t)\mathrm{d}t
\nonumber\\
%\hspace{-0.3cm}&=&\hspace{-0.3cm}
%q W(0)\int_{0}^{b-x} \exp\left\{-\int_{x}^{x+t}\frac{W^{(q)\prime}(\overline{\xi}\left(z\right))}
%{W^{(q)}(\overline{\xi}\left(z\right))}\mathrm{d}z\right\}h(x+t)\mathrm{d}t
%\nonumber\\
\hspace{-0.3cm}&=&\hspace{-0.3cm}
q W(0)\int_{x}^{b} \exp\left(-\int_{x}^{s}\frac{W^{(q)\prime}(\overline{\xi}\left(z\right))}
{W^{(q)}(\overline{\xi}\left(z\right))}\mathrm{d}z\right)h(s)\mathrm{d}s,
\end{eqnarray}
where we have used the fact that $L^{-1}(t)$ has the same law as the first exit time $\tau_{x+t}^{+}$ under $\mathbb{P}_{x}$.

By the strong Markov property of $(U,\overline{U})$, the memoryless property of the exponentially distributed random variable, \eqref{two.sid.exit.Y} and \eqref{12}, we have
\begin{eqnarray}
\label{25}
\hspace{-0.2cm}
qg_{4}(x)\hspace{-0.3cm}&=&\hspace{-0.3cm}
\mathbb{E}_{x}\left(\left.\mathbb{E}_{x}\left(h(U(e_{q}))\mathbf{1}_{\{U(e_{q})=\overline{U}(e_{q}),\tau_{\xi}<e_{q}<\kappa_{b}^{+}\}}\right|\mathcal{F}_{T_{1}}\right)\right)
\nonumber\\
\hspace{-0.3cm}&=&\hspace{-0.3cm}
\mathbb{E}_{x}\left(\mathbf{1}_{\{T_{1}<e_{q}\wedge \kappa_{b}^{+}\}}
\mathbb{E}_{\overline{X}(\tau_{\xi})}\left(h(U(e_{q}))\mathbf{1}_{\{U(e_{q})=\overline{U}(e_{q}),e_{q}<\kappa_{b}^{+}\}}\right)\right)
\nonumber\\
\hspace{-0.3cm}&=&\hspace{-0.3cm}
\mathbb{E}_{x}\left(\mathbf{1}_{\{T_{1}<e_{q}\wedge \kappa_{b}^{+}\}}\left(qg_{3}(\overline{X}(\tau_{\xi}))+qg_{4}(\overline{X}(\tau_{\xi}))\right)\right)
\nonumber\\
\hspace{-0.3cm}&=&\hspace{-0.3cm}
\mathbb{E}_{x}\left(\mathbb{E}_{x}\left(\left.\mathrm{e}^{-qT_{1}}\mathbf{1}_{\{T_{1}< \kappa_{b}^{+}\}}\left(qg_{3}(\overline{X}(\tau_{\xi}))+qg_{4}(\overline{X}(\tau_{\xi}))\right)\right|\mathcal{F}_{\tau_{\xi}}\right)\right)
\nonumber\\
\hspace{-0.3cm}&=&\hspace{-0.3cm}
q\mathbb{E}_{x}\left(\mathrm{e}^{-q\tau_{\xi}}\mathbf{1}_{\{\tau_{\xi}<\kappa_{b}^{+}\}}\frac{1}{Z^{(q)}(\overline{\xi}(\overline{X}(\tau_{\xi})))}\left(g_{3}(\overline{X}(\tau_{\xi}))+g_{4}(\overline{X}(\tau_{\xi}))\right)\right)
\nonumber\\
\hspace{-0.3cm}&=&\hspace{-0.3cm}
q\int_{x}^{b}\left(g_{3}(s)+g_{4}(s)\right)
\exp\left(-\int_{x}^{s}\frac{W^{(q)\prime}(\overline{\xi}(z))}{ W^{(q)}(\overline{\xi}(z))}\mathrm{d}z\right)
\left(\frac{W^{(q)\prime}(\overline{\xi}(s))}{W^{(q)}(\overline{\xi}(s))}-\frac{qW^{(q)}(\overline{\xi}(s))}{Z^{(q)}(\overline{\xi}(s))}\right)\mathrm{d}s,
\end{eqnarray}
where we also used the fact that
$\tau_{\xi}<\kappa_{b}^{+}$ implies $T_{1}<\kappa_{b}^{+}$
(see also \eqref{part2}), and the tact that
$\tau_{\xi}<e_{q}$
combined with
$U(e_{q})=\overline{U}(e_{q})$ implies $T_{1}\leq e_{q}$.
%   Taking derivatives on both sides of (\ref{qg4}) with respect to $x$, we get
%   \begin{eqnarray}\label{qg'4}
%   g_{4}'(x)\hspace{-0.3cm}&=&\hspace{-0.3cm}\frac{W^{(q)\prime}(\overline{\xi}(x))}{ W^{(q)}(\overline{\xi}(x))}g_{4}(x)
%   -(g_{3}(x)+g_{4}(x))
%   \left(\frac{W^{(q)\prime}(\overline{\xi}(x))}{W^{(q)}(\overline{\xi}(x))}-\frac{qW^{(q)}(\overline{\xi}(s))}{Z^{(q)}(\overline{\xi}(s))}\right)
%   \nonumber\\
%   \hspace{-0.3cm}&=&\hspace{-0.3cm}
%   \frac{qW^{(q)}(\overline{\xi}(x))}{Z^{(q)}(\overline{\xi}(s))}g_{4}(x)
%   -
%  \left(\frac{W^{(q)\prime}(\overline{\xi}(x))}{W^{(q)}(\overline{\xi}(x))}-\frac{qW^{(q)}(\overline{\xi}(s))}{Z^{(q)}(\overline{\xi}(s))}\right)g_{3}(x).\nonumber
%   \end{eqnarray}

By the compensation formula, the memoryless property for exponential random variable and \eqref{upper.lower.boun.resu.}, $qg_{1}(x)$ can be expressed as
\begin{eqnarray}\label{qg1.raw.}
%qg_{1}(x)
%\hspace{-0.3cm}&&\hspace{-0.3cm}\mathbb{E}_{x}\left(h(X(e_{q}))\mathbf{1}_{\{X(e_{q})<\overline{X}(e_{q}),\,e_{q}<\tau_{b}^{+}\wedge \tau_{\xi}\}}\right)
%\nonumber\\
%\hspace{-0.3cm}&=&\hspace{-0.3cm}
%\mathbb{E}_{x}\left(\int_{0}^{\infty}q\mathrm{e}^{-qt}h(X(t))\mathbf{1}_{\{X(t)<\overline{X}(t),\,t<\tau_{b}^{+}\wedge \tau_{\xi}\}}\mathrm{d}t\right)
%\nonumber\\
\hspace{-0.3cm}&&\hspace{-0.3cm}
\mathbb{E}_{x}\left(\int_{0}^{\infty}\sum_{g}\mathrm{e}^{-qg}\prod\limits_{r<g}\mathbf{1}_{\{\overline{\varepsilon}_{r}\leq \overline{\xi}(x+L(r)),\,L(g)\leq b-x\}}\,h\left(x+L(g)-\varepsilon_{g}(t-g)\right)
\right.
\nonumber\\
\hspace{-0.3cm}&&\hspace{0.5cm}
\left.
\times q\mathrm{e}^{-q (t-g)}\mathbf{1}_{\{g<t<g+\zeta_{g}\wedge \rho_{\overline{\xi}(x+L(g))}^{+}(g)\}}\mathrm{d}t\right)
\nonumber\\
%\hspace{-0.3cm}&=&\hspace{-0.3cm}
%\mathbb{E}_{x}\left(\sum_{g}\mathrm{e}^{-qg}
%\prod\limits_{r<g}\mathbf{1}_{\{\overline{\varepsilon}_{r}\leq \overline{\xi}(x+L(r)),\,L(g)\leq b-x\}}\right.
%\nonumber\\
%\hspace{-0.3cm}&&\hspace{0.5cm}
%\left.\times\int_{g}^{g+\zeta_{g}\wedge \rho_{\overline{\xi}(x+L(g))}^{+}(g)}q \mathrm{e}^{-q (t-g)}h\left(x+L(g)-\varepsilon_{g}(t-g)\right)
%\mathrm{d}t\right)
%\nonumber\\
\hspace{-0.3cm}&=&\hspace{-0.3cm}
\mathbb{E}_{x}\left(\sum_{g}\mathrm{e}^{-qg}
\prod\limits_{r<g}\mathbf{1}_{\{\overline{\varepsilon}_{r}\leq \overline{\xi}(x+L(r)),\,L(g)\leq b-x\}}\right.
\nonumber\\
\hspace{-0.3cm}&&\hspace{0.5cm}
\left.
\times\int_{0}^{\infty}q \mathrm{e}^{-q s}h\left(x+L(g)-\varepsilon_{g}(s)\right)\mathbf{1}_{\{s<\zeta_{g}\wedge\rho_{\overline{\xi}(x+L(g))}^{+}(g)\}}
\mathrm{d}s\right)
\nonumber\\
\hspace{-0.3cm}&=&\hspace{-0.3cm}
\mathbb{E}_{x}\left(\int_{0}^{\infty}\mathrm{e}^{-qt}\prod\limits_{r<t}\mathbf{1}_{\{\overline{\varepsilon}_{r}\leq \overline{\xi}(x+L(r)),\,L(t)\leq b-x\}}\right.
\nonumber\\
\hspace{-0.3cm}&&\hspace{0.5cm}
\times
\left.\left(\int_{\mathcal{E}}
\int_{0}^{\infty}q \mathrm{e}^{-q s}h\left(x+L(t)-\varepsilon(s)\right)\mathbf{1}_{\{s<\zeta\wedge\rho_{\overline{\xi}(x+L(t))}^{+}\}}
\mathrm{d}s \,n\left(\mathrm{d}\varepsilon\right)\right)\mathrm{d}L(t)\right)
\nonumber\\
%\hspace{-0.3cm}&=&\hspace{-0.3cm}
%\mathbb{E}_{x}\left(\int_{0}^{b -x}
%\mathrm{e}^{-qL_{t-}^{-1}}\prod\limits_{r<L_{t-}^{-1}}\mathbf{1}_{\{\overline{\varepsilon}_{r}\leq \overline{\xi}(x+L(r))\}}\right.
%\nonumber\\
%\hspace{-0.3cm}&&\hspace{0.5cm}
%\left.\times\,n\left(
%\int_{0}^{\infty}q \mathrm{e}^{-q s}h\left(x+t-\varepsilon(s)\right)\mathbf{1}_{\{s<\zeta\wedge\rho_{\overline{\xi}(x+t)}^{+}\}}
%\mathrm{d}s\right)\mathrm{d}t\right)
%\nonumber\\
\hspace{-0.3cm}&=&\hspace{-0.3cm}
q\int_{0}^{b-x}\mathbb{E}_{x}\left(\mathrm{e}^{-qL_{t-}^{-1}}
\mathbf{1}_{\{L_{t-}^{-1}< \tau_{\xi}\}}\right)
\int_{0}^{\infty}n\left(\mathrm{e}^{-qs}h(x+t-\varepsilon(s))\mathbf{1}_{\{s<\zeta\wedge\rho^{+}_{\overline{\xi}(x+t)}\}}\right)
\mathrm{d}s\mathrm{d}t
\nonumber\\
%\hspace{-0.3cm}&=&\hspace{-0.3cm}
%q\int_{0}^{b-x}\exp\left\{-\int_{x}^{x+t}\frac{W^{(q)\prime}(\overline{\xi}(z))}{ %W^{(q)}(\overline{\xi}(z))}\mathrm{d}z\right\}\int_{0}^{\infty}n\left(\mathrm{e}^{-qs}h(x+t-\varepsilon(s))
%\mathbf{1}_{\{s<\zeta\wedge\rho^{+}_{\overline{\xi}(x+t)}\}}\right)\mathrm{d}s\mathrm{d}t
%\nonumber\\
\hspace{-0.3cm}&=&\hspace{-0.3cm}
q\int_{x}^{b}\exp\left(-\int_{x}^{t}\frac{W^{(q)\prime}(\overline{\xi}(z))}{ W^{(q)}(\overline{\xi}(z))}\mathrm{d}z\right)\int_{0}^{\infty}
n\left(\mathrm{e}^{-qs}h(t-\varepsilon(s))\mathbf{1}_{\{s<\zeta\wedge\rho^{+}_{\overline{\xi}(t)}\}}\right)\mathrm{d}s\mathrm{d}t,
\end{eqnarray}
where $g$ is the left-end point of the excursion $\varepsilon_{g}$, as introduced at the end  of Section 2.
Applying the same arguments as in (\ref{qg3}) and (\ref{qg1.raw.}) we have
\begin{eqnarray}\label{h1}
\hspace{-0.3cm}&&\hspace{-0.3cm}\mathbb{E}_{x}\left(h(X(e_{q}))\mathbf{1}_{\{e_{q}<\tau_{b}^{+}\wedge \tau_{c}^{-}\}}\right)
\nonumber\\
\hspace{-0.3cm}&=&\hspace{-0.3cm}
\mathbb{E}_{x}\left(h(X(e_{q}))\mathbf{1}_{\{X(e_{q})=\overline{X}(e_{q}),\,e_{q}<\tau_{b}^{+}\wedge \tau_{c}^{-}\}}\right)+\mathbb{E}_{x}\left(h(X(e_{q}))\mathbf{1}_{\{X(e_{q})<\overline{X}(e_{q}),\,e_{q}<\tau_{b}^{+}\wedge \tau_{c}^{-}\}}\right)
%  \nonumber\\
%  \hspace{-0.3cm}&=&\hspace{-0.3cm}qg_{3}(x)|_{\xi\equiv c}
%  +q\int_{x}^{b}\frac{W^{(q)}(x-c)}{ W^{(q)}(r-c)}\int_{0}^{\infty}n\left(\mathrm{e}^{-qs}h(r-\varepsilon(s))\mathbf{1}_{\{s<\rho^{+}_{r-c}\wedge\zeta\}}\right)\mathrm{d}s\mathrm{d}r
\nonumber\\
\hspace{-0.3cm}&=&\hspace{-0.3cm}
q\int_{x}^{b}\frac{W^{(q)}(x-c)}{ W^{(q)}(t-c)}\left(W(0)h(t)+\int_{0}^{\infty}n\left(\mathrm{e}^{-qs}h(t-\varepsilon(s))\mathbf{1}_{\{s<\zeta\wedge\rho^{+}_{t-c}\}}\right)\mathrm{d}s\right)\mathrm{d}t,
\end{eqnarray}
where the identity
$$\mathbb{E}_{x-c}\left(\mathrm{e}^{-q \tau_{t-c}^{+}};\tau_{t-c}^{+}<\tau_{0}^{-}\right)=\frac{W^{(q)}(x-c)}{ W^{(q)}(t-c)},\quad -\infty<c\leq x\leq t<\infty,$$
is used.
Equating the right hand sides of (\ref{h1}) and (\ref{h2}) and then differentiating the resulting equation with respect to $b$ gives
\begin{eqnarray}
\hspace{-0.3cm}&&\hspace{-0.3cm}
\frac{W^{(q)}(x-c)}{ W^{(q)}(b-c)}\left(W(0)h(b)+\int_{0}^{\infty}n\left(\mathrm{e}^{-qs}h(b-\varepsilon(s))\mathbf{1}_{\{s<\zeta\wedge\rho^{+}_{b-c}\}}\right)\mathrm{d}s\right)
\nonumber\\
\hspace{-0.3cm}&=&\hspace{-0.3cm}
\frac{W^{(q)}(x-c)}{ W^{(q)}(b-c)}\left(h(b)W(0)+\int_{c}^{b}h(y)\left(W^{(q)\prime}(b-y)-\frac{W^{(q)\prime}(b-c)}{W^{(q)}(b-c)}W^{(q)}(b-y)\right)\mathrm{d}y\right),\nonumber
\end{eqnarray}
or equivalently,
\begin{eqnarray}\label{impo.iden.for.n.01}
\hspace{-0.3cm}&&\hspace{-0.3cm}
\int_{0}^{\infty}n\left(\mathrm{e}^{-qs}h(b-\varepsilon(s))\mathbf{1}_{\{s<\zeta\wedge\rho^{+}_{b-c}\}}\right)\mathrm{d}s
\nonumber\\
\hspace{-0.3cm}&=&\hspace{-0.3cm}
\int_{c}^{b}h(y)\left(W^{(q)\prime}(b-y)-\frac{W^{(q)\prime}(b-c)}{W^{(q)}(b-c)}W^{(q)}(b-y)\right)\mathrm{d}y.
\end{eqnarray}
Combining (\ref{impo.iden.for.n.01}) and (\ref{qg1.raw.}), we get
\begin{eqnarray}\label{qg1}
g_{1}(x)\hspace{-0.3cm}&=&\hspace{-0.3cm}\int_{x}^{b}\mathrm{e}^{-\int_{x}^{s}\frac{W^{(q)\prime}(\overline{\xi}(z))}{ W^{(q)}(\overline{\xi}(z))}\mathrm{d}z}
\int_{\xi(s)}^{s}h(y)\left(W^{(q)\prime}(s-y)-\frac{W^{(q)\prime}(\overline{\xi}(s))}{W^{(q)}(\overline{\xi}(s))}W^{(q)}(s-y)\right)\mathrm{d}y
\mathrm{d}s.
\end{eqnarray}

Using the memoryless property of exponential random variable, $qg_{2}(x)$ can be rewritten as
\begin{eqnarray}\label{qg2}
qg_{2}(x)\hspace{-0.3cm}&=&\hspace{-0.3cm}\mathbb{E}_{x}\left(h(U(e_{q}))\mathbf{1}_{\{U(e_{q})<\overline{U}(e_{q}),\,\tau_{\xi}<e_{q}<T_{1}<\kappa_{b}^{+}\}}\right)
\nonumber\\
\hspace{-0.3cm}&&\hspace{-0.3cm}
+\mathbb{E}_{x}\left(h(U(e_{q}))\mathbf{1}_{\{U(e_{q})<\overline{U}(e_{q}),\,\tau_{\xi}<T_{1}<e_{q}<\kappa_{b}^{+}\}}\right)
\nonumber\\
\hspace{-0.3cm}&=&\hspace{-0.3cm}\mathbb{E}_{x}\left(h(U(e_{q}))\mathbf{1}_{\{\tau_{\xi}<e_{q}<T_{1}<\kappa_{b}^{+}\}}\right)
+\mathbb{E}_{x}\left(h(U(e_{q}))\mathbf{1}_{\{U(e_{q})<\overline{U}(e_{q}),\,\tau_{\xi}<T_{1}<e_{q}<\kappa_{b}^{+}\}}\right)
\nonumber\\
\hspace{-0.3cm}&:=&\hspace{-0.3cm}\,qg_{21}(x)+qg_{22}(x),
\end{eqnarray}
where we also took use of the fact that $\tau_{\xi}<\kappa_{b}^{+}$ implies $T_{1}<\kappa_{b}^{+}$ as in \eqref{part2}.

Using (\ref{reso.meas.Y}) and \eqref{12} %{\color{red}Where is\eqref{upper.lower.boun.resu.} used?,}
we have
\begin{eqnarray}\label{qg21}
qg_{21}(x)\hspace{-0.3cm}&=&\hspace{-0.3cm}\mathbb{E}_{x}\left(\mathbb{E}_{x}\left(\left.h(U(e_{q}))\mathbf{1}_{\{\tau_{\xi}<e_{q}<T_{1}<\kappa_{b}^{+}\}}\right|\mathcal{F}_{\tau_{\xi}}\right)\right)
\nonumber\\
\hspace{-0.3cm}&=&\hspace{-0.3cm}
\mathbb{E}_{x}\left(\mathbf{1}_\{\tau_{\xi}<e_{q}\wedge\kappa_{b}^{+}\}\mathbb{E}_{x}\left(\left.h(U(e_{q}))\mathbf{1}_{\{e_{q}<T_{1}\}}\right|\mathcal{F}_{\tau_{\xi}}\right)\right)
\nonumber\\
\hspace{-0.3cm}&=&\hspace{-0.3cm}
\mathbb{E}_{x}\left(\mathbf{1}_{\{\tau_{\xi}<e_{q}\wedge\kappa_{b}^{+}\}}\left.\mathbb{E}_{}\left(h(\xi(z)+Y(e_{q}))\mathbf{1}_{\{e_{q}<\sigma^{+}_{\overline{\xi}(z)}\}}\right)\right|_{z=\overline{X}(\tau_{\xi})}\right)
\nonumber\\
\hspace{-0.3cm}&=&\hspace{-0.3cm}
q\mathbb{E}_{x}\left(\mathbf{1}_{\{\tau_{\xi}<e_{q}\wedge\kappa_{b}^{+}\}}\int_{0}^{\overline{\xi}(\overline{X}(\tau_{\xi}))}h(\xi(\overline{X}(\tau_{\xi}))+y)
\frac{W^{(q)}(\overline{\xi}(\overline{X}(\tau_{\xi}))-y)}{Z^{(q)}(\overline{\xi}(\overline{X}(\tau_{\xi})))}\mathrm{d}y\right)
\nonumber\\
%\hspace{-0.3cm}&=&\hspace{-0.3cm}
%q\mathbb{E}_{x}\left(\mathrm{e}^{-q\tau_{\xi}}\mathbf{1}_{\{\tau_{\xi}<\kappa_{b}^{+}\}}\int_{0}^{\overline{\xi}(\overline{X}(\tau_{\xi}))}h(\xi(\overline{X}(\tau_{\xi}))+y)
%\frac{W^{(q)}(\overline{\xi}(\overline{X}(\tau_{\xi}))-y)}{Z^{(q)}(\overline{\xi}(\overline{X}(\tau_{\xi})))}\mathrm{d}y\right)
%\nonumber\\
\hspace{-0.3cm}&=&\hspace{-0.3cm}
q\int_{x}^{b}
\exp\left(-\int_{x}^{s}\frac{W^{(q)\prime}(\overline{\xi}(z))}{ W^{(q)}(\overline{\xi}(z))}\mathrm{d}z\right)
\left(\frac{W^{(q)\prime}(\overline{\xi}(s))}{W^{(q)}(\overline{\xi}(s))}-\frac{qW^{(q)}(\overline{\xi}(s))}{Z^{(q)}(\overline{\xi}(s))}\right)
\nonumber\\
\hspace{-0.3cm}&&\hspace{-0.3cm}
\,\,\,\,\,\,\,\,\,\times\int_{0}^{\overline{\xi}(s)}h(\xi(s)+y)
W^{(q)}(\overline{\xi}(s)-y)\mathrm{d}y\mathrm{d}s.
\end{eqnarray}

In addition, observing that $\tau_\xi<T_1$ for $\tau_\xi<\infty$, by (\ref{two.sid.exit.Y}) and \eqref{12} one can rewrite $qg_{22}(x)$ as
\begin{eqnarray}\label{qg22}
qg_{22}(x)\hspace{-0.3cm}&=&\hspace{-0.3cm}
\mathbb{E}_{x}\left(\mathbb{E}_{x}\left(\left.h(U(e_{q}))\mathbf{1}_{\{U(e_{q})<\overline{U}(e_{q}),\,\tau_{\xi}<T_{1}<e_{q}<\kappa_{b}^{+}\}}\right|\mathcal{F}_{T_{1}}\right)\right)
\nonumber\\
\hspace{-0.3cm}&=&\hspace{-0.3cm}
\mathbb{E}_{x}\left(\mathbf{1}_{\{T_{1}<e_{q}\wedge\kappa_{b}^{+}\}}
\left(qg_{1}(\overline{X}(\tau_{\xi}))+qg_{2}(\overline{X}(\tau_{\xi}))\right)\right)
\nonumber\\
\hspace{-0.3cm}&=&\hspace{-0.3cm}
\mathbb{E}_{x}\left(\mathbb{E}_{x}\left(\left.\mathrm{e}^{-qT_{1}}\mathbf{1}_{\{T_{1}<\kappa_{b}^{+}\}}
\left(qg_{1}(\overline{X}(\tau_{\xi}))+qg_{2}(\overline{X}(\tau_{\xi}))\right)\right|\mathcal{F}_{\tau_{\xi}}\right)\right)
\nonumber\\
\hspace{-0.3cm}&=&\hspace{-0.3cm}
\mathbb{E}_{x}\left(\mathrm{e}^{-q\tau_{\xi}}\mathbf{1}_{\{\tau_{\xi}<\kappa_{b}^{+}\}}\frac{1}{Z^{(q)}(\overline{\xi}(\overline{X}(\tau_{\xi})))}\left(qg_{1}(\overline{X}(\tau_{\xi}))+qg_{2}(\overline{X}(\tau_{\xi}))\right)\right)
\nonumber\\
%\hspace{-0.3cm}&=&\hspace{-0.3cm}
%q\int_{x}^{b}
%\frac{\left(g_{1}(s)+g_{2}(s)\right)\mathrm{e}^{-\int_{x}^{s}\frac{W^{(q)\prime}(\overline{\xi}(z))}{ W^{(q)}(\overline{\xi}(z))}\mathrm{d}z}}{Z^{(q)}(\overline{\xi}(s))}
%\left(\frac{W^{(q)\prime}(\overline{\xi}(s))}{W^{(q)}(\overline{\xi}(s))}Z^{(q)}(\overline{\xi}(s))-qW^{(q)}(\overline{\xi}(s))\right)\mathrm{d}s
%\nonumber\\
\hspace{-0.3cm}&=&\hspace{-0.3cm}
q\int_{x}^{b}
\left(g_{1}(s)+g_{2}(s)\right)\mathrm{e}^{-\int_{x}^{s}\frac{W^{(q)\prime}(\overline{\xi}(z))}{ W^{(q)}(\overline{\xi}(z))}\mathrm{d}z}
\left(\frac{W^{(q)\prime}(\overline{\xi}(s))}{W^{(q)}(\overline{\xi}(s))}-\frac{qW^{(q)}(\overline{\xi}(s))}{Z^{(q)}(\overline{\xi}(s))}\right)\mathrm{d}s.
\end{eqnarray}

Combining (\ref{qg3}), \eqref{25}, (\ref{qg1}), (\ref{qg2}), (\ref{qg21}) and (\ref{qg22}), we obtain the following differential equation on $g(x)$.
\begin{eqnarray}\label{g'}
g^{\prime}(x)\hspace{-0.3cm}&=&\hspace{-0.3cm}\frac{W^{(q)\prime}(\overline{\xi}(x))}{ W^{(q)}(\overline{\xi}(x))}g(x)- W(0)h(x)
-(g_{3}(x)+g_{4}(x))
\left(\frac{W^{(q)\prime}(\overline{\xi}(x))}{W^{(q)}(\overline{\xi}(x))}-\frac{qW^{(q)}(\overline{\xi}(s))}{Z^{(q)}(\overline{\xi}(s))}\right)
\nonumber\\
\hspace{-0.3cm}&&\hspace{-0.3cm}
-\int_{\xi(x)}^{x}h(y)\left(W^{(q)\prime}(x-y)-\frac{W^{(q)\prime}(\overline{\xi}(x))}{W^{(q)}(\overline{\xi}(x))}W^{(q)}(x-y)\right)\mathrm{d}y
\nonumber\\
\hspace{-0.3cm}&&\hspace{-0.3cm}
-\left(g_{1}(x)+g_{2}(x)\right)
\left(\frac{W^{(q)\prime}(\overline{\xi}(x))}{W^{(q)}(\overline{\xi}(x))}-\frac{qW^{(q)}(\overline{\xi}(x))}{Z^{(q)}(\overline{\xi}(x))}\right)
\nonumber\\
\hspace{-0.3cm}&&\hspace{-0.3cm}
-\left(\frac{W^{(q)\prime}(\overline{\xi}(x))}{W^{(q)}(\overline{\xi}(x))}-\frac{qW^{(q)}(\overline{\xi}(x))}{Z^{(q)}(\overline{\xi}(x))}\right)
\int_{0}^{\overline{\xi}(x)}h(\xi(x)+y)
W^{(q)}(\overline{\xi}(x)-y)\mathrm{d}y
\nonumber\\
\hspace{-0.3cm}&=&\hspace{-0.3cm}
\frac{qW^{(q)}(\overline{\xi}(x))}{Z^{(q)}(\overline{\xi}(x))}g(x)- W(0)h(x)
-\int_{\xi(x)}^{x}h(y)\left(W^{(q)\prime}(x-y)-\frac{W^{(q)\prime}(\overline{\xi}(x))}{W^{(q)}(\overline{\xi}(x))}W^{(q)}(x-y)\right)\mathrm{d}y
\nonumber\\
\hspace{-0.3cm}&&\hspace{-0.3cm}
-\left(\frac{W^{(q)\prime}(\overline{\xi}(x))}{W^{(q)}(\overline{\xi}(x))}-\frac{qW^{(q)}(\overline{\xi}(x))}{Z^{(q)}(\overline{\xi}(x))}\right)
\int_{\xi(x)}^{x}h(y)
W^{(q)}(x-y)\mathrm{d}y
\nonumber\\
\hspace{-0.3cm}&=&\hspace{-0.3cm}
\frac{qW^{(q)}(\overline{\xi}(x))}{Z^{(q)}(\overline{\xi}(x))}g(x)- W(0)h(x)
-\int_{\xi(x)}^{x}h(y)\left(W^{(q)\prime}(x-y)-\frac{qW^{(q)}(\overline{\xi}(x))}{W^{(q)}(\overline{\xi}(x))}W^{(q)}(x-y)\right)\mathrm{d}y,
\end{eqnarray}
with boundary condition $g(b)=0$. Solving equation (\ref{g'}) yields
\begin{eqnarray}
\label{eq.gene.reso.meas.}
g(x)\hspace{-0.3cm}&=&\hspace{-0.3cm}\int_{0}^{\infty}\mathrm{e}^{-qt}\mathbb{E}_{x}(h(U(t));t<\kappa_{b}^{+})\mathrm{d}t
%=
%\exp\{-\int_{x}^{b}\frac{qW^{(q)}(z-\xi(z))}{ Z^{(q)}(z-\xi(z))}\mathrm{d}z\}
\nonumber\\
\hspace{-0.3cm}&=&\hspace{-0.3cm}
W^{(q)}(0)\int_{x}^{b}h(y)
\exp\left(-\int_{x}^{y}\frac{qW^{(q)}(\overline{\xi}(z))}{ Z^{(q)}(\overline{\xi}(z))}\mathrm{d}z\right)\mathrm{d}y
+\int_{x}^{b}\exp\left(-\int_{x}^{y}\frac{qW^{(q)}(\overline{\xi}(z))}{ Z^{(q)}(\overline{\xi}(z))}\mathrm{d}z\right)
\nonumber\\
\hspace{-0.3cm}&&\hspace{-0.3cm}\,\,\,\,\,\,\,\,\,
\times\int_{\xi(y)}^{y}h(z)\left(W^{(q)\prime}(y-z)-\frac{qW^{(q)}(\overline{\xi}(y))}{Z^{(q)}(\overline{\xi}(y))}W^{(q)}(y-z)\right)\mathrm{d}z  \mathrm{d}y.
\end{eqnarray}
The resolvent density (\ref{resovent.meas.}) follows immediately from (\ref{eq.gene.reso.meas.}).
\end{proof}

\vspace{0.2cm}
\begin{rem}
%Taking a limit one can prove that for $h(z)=\mathbf{1}_{(-\infty, \,u]}(z)$ with $u\leq b$,
Note that
\begin{eqnarray}
\label{eq.gene.reso.meas.1}
\hspace{-0.3cm}&&\hspace{-0.3cm}\int_{0}^{\infty}\mathrm{e}^{-qt}\mathbb{P}_{x}(U(t)\leq u,t<\kappa_{b}^{+})\mathrm{d}t
%   =
%   \exp\{-\int_{x}^{b}\frac{qW^{(q)}(z-\xi(z))}{ Z^{(q)}(z-\xi(z))}\mathrm{d}z\}
\nonumber\\
\hspace{-0.3cm}&=&\hspace{-0.3cm}
W^{(q)}(0)\left(\int_{x}^{u}
\exp\left(-\int_{x}^{y}\frac{qW^{(q)}(\overline{\xi}(z))}{ Z^{(q)}(\overline{\xi}(z))}\mathrm{d}z\right)\mathrm{d}y\,\mathbf{1}_{(x,b)}(u)
+\int_{x}^{b}
\exp\left(-\int_{x}^{y}\frac{qW^{(q)}(\overline{\xi}(z))}{ Z^{(q)}(\overline{\xi}(z))}\mathrm{d}z\right)\mathrm{d}y\,\mathbf{1}_{u\geq b}\right)
\nonumber\\
\hspace{-0.3cm}&&\hspace{-0.3cm}
+\int_{x}^{b}\exp\left(-\int_{x}^{y}\frac{qW^{(q)}(\overline{\xi}(z))}{ Z^{(q)}(\overline{\xi}(z))}\mathrm{d}z\right)
\left(\int_{\xi(y)}^{y}
\left(W^{(q)\prime}(y-z)-\frac{qW^{(q)}(\overline{\xi}(y))}{Z^{(q)}(\overline{\xi}(y))}W^{(q)}(y-z)\right)\mathrm{d}z\,\mathbf{1}_{u\geq y}\right.
\nonumber\\
\hspace{-0.3cm}&&\hspace{-0.3cm}\,\,\,\,\,\,\,\,\,
\left.\,\,\,\,\,\,\,\,\,\,\,\,
+\int_{\xi(y)}^{u}\left(W^{(q)\prime}(y-z)-\frac{qW^{(q)}(\overline{\xi}(y))}{Z^{(q)}(\overline{\xi}(y))}W^{(q)}(y-z)\right)\mathrm{d}z\,\mathbf{1}_{\xi(y)<u< y}\right)  \mathrm{d}y.
\end{eqnarray}
Letting $u=b$ in (\ref{eq.gene.reso.meas.1}), we get
\begin{eqnarray}
\label{}
\hspace{-0.3cm}&&\hspace{-0.3cm}\frac{1-\mathbb{E}_{x}(\mathrm{e}^{-q\kappa_{b}^{+}})}{q}
%=\frac{1-\mathbb{P}_{x}(e_{q}\geq\kappa_{b}^{+})}{q}=\frac{\mathbb{P}_{x}(e_{q}<\kappa_{b}^{+})}{q}
=\int_{0}^{\infty}\mathrm{e}^{-qt}\mathbb{P}_{x}(t<\kappa_{b}^{+})\mathrm{d}t
\nonumber\\
\hspace{-0.3cm}&=&\hspace{-0.3cm}
\int_{0}^{\infty}\mathrm{e}^{-qt}\mathbb{P}_{x}(U(t)\leq b,t<\kappa_{b}^{+})\mathrm{d}t
\nonumber\\
\hspace{-0.3cm}&=&\hspace{-0.3cm}
W^{(q)}(0)\int_{x}^{b}
\exp\left(-\int_{x}^{y}\frac{qW^{(q)}(\overline{\xi}(z))}{ Z^{(q)}(\overline{\xi}(z))}\mathrm{d}z\right)\mathrm{d}y
\nonumber\\
\hspace{-0.3cm}&&\hspace{-0.3cm}
+\int_{x}^{b}\exp\left(-\int_{x}^{y}\frac{qW^{(q)}(\overline{\xi}(z))}{ Z^{(q)}(\overline{\xi}(z))}\mathrm{d}z\right)
\int_{\xi(y)}^{y}\left(W^{(q)\prime}(y-z)-\frac{qW^{(q)}(\overline{\xi}(y))}{Z^{(q)}(\overline{\xi}(y))}W^{(q)}(y-z)\right)\mathrm{d}z  \mathrm{d}y
\nonumber\\
%\hspace{-0.3cm}&=&\hspace{-0.3cm}
%W^{(q)}(0)\int_{x}^{b}
%\exp\left\{-\int_{x}^{y}\frac{qW^{(q)}(\overline{\xi}(z))}{ Z^{(q)}(\overline{\xi}(z))}\mathrm{d}z\right\}\mathrm{d}y
%\nonumber\\
%\hspace{-0.3cm}&&\hspace{-0.3cm}
%+\int_{x}^{b}\exp\left\{-\int_{x}^{y}\frac{qW^{(q)}(\overline{\xi}(z))}{ Z^{(q)}(\overline{\xi}(z))}\mathrm{d}z\right\}
%\left(W^{(q)}(\overline{\xi}(y))-W^{(q)}(0)-\frac{qW^{(q)}(\overline{\xi}(y))}{Z^{(q)}(\overline{\xi}(y))}\int_{0}^{\overline{\xi}(y)}W^{(q)}(z)\mathrm{d}z \right) \mathrm{d}y
%\nonumber\\
%\hspace{-0.3cm}&=&\hspace{-0.3cm}
%\int_{x}^{b}\exp\left\{-\int_{x}^{y}\frac{qW^{(q)}(\overline{\xi}(z))}{ Z^{(q)}(\overline{\xi}(z))}\mathrm{d}z\right\}
%\left(W^{(q)}(\overline{\xi}(y))-\frac{qW^{(q)}(\overline{\xi}(y))}{Z^{(q)}(\overline{\xi}(y))}\frac{Z^{(q)}(\overline{\xi}(y))-1}{q} \right) \mathrm{d}y
%\nonumber\\
\hspace{-0.3cm}&=&\hspace{-0.3cm}
\int_{x}^{b}\exp\left(-\int_{x}^{y}\frac{qW^{(q)}(\overline{\xi}(z))}{ Z^{(q)}(\overline{\xi}(z))}\mathrm{d}z\right)
\frac{1}{q}\frac{qW^{(q)}(\overline{\xi}(y))}{Z^{(q)}(\overline{\xi}(y))} \mathrm{d}y
\nonumber\\
%\hspace{-0.3cm}&=&\hspace{-0.3cm}
%\left.-\frac{1}{q}\exp\{-\int_{x}^{y}\frac{qW^{(q)}(\overline{\xi}(z))}{ Z^{(q)}(\overline{\xi}(z))}\mathrm{d}z\}\right|_{y=x}^{y=b}
%\nonumber\\
\hspace{-0.3cm}&=&\hspace{-0.3cm}
\frac{1}{q}-\frac{1}{q}\exp\left(-\int_{x}^{b}\frac{qW^{(q)}(\overline{\xi}(z))}{ Z^{(q)}(\overline{\xi}(z))}\mathrm{d}z\right),\nonumber
\end{eqnarray}
which coincides with (\ref{upper.lower.boun.resu.}) of Proposition \ref{Laplace.tra.ucro.}.
\end{rem}

\vspace{0.2cm}
\begin{rem}
Letting $\xi\equiv0$ in (\ref{resovent.meas.}) and noting that
\[\exp\left(-\int_{x}^{u}\frac{qW^{(q)}(z)}{ Z^{(q)}(z)}\mathrm{d}z\right)=\frac{Z^{(q)}(x)}{Z^{(q)}(u)},\]
 we get
\begin{eqnarray}
\label{}
\hspace{-0.3cm}&&\hspace{-0.3cm}\int_{0}^{\infty}\mathrm{e}^{-qt}\mathbb{P}_{x}(U(t)\in \mathrm{d}u,t<\kappa_{b}^{+})\mathrm{d}t
%=
%\exp\{-\int_{x}^{b}\frac{qW^{(q)}(z-\xi(z))}{ Z^{(q)}(z-\xi(z))}\mathrm{d}z\}
\nonumber\\
\hspace{-0.3cm}&=&\hspace{-0.3cm}
\frac{W^{(q)}(0)Z^{(q)}(x)\mathbf{1}_{(x,b)}(u)}{Z^{(q)}(u)}\mathrm{d}u
+\int_{x}^{b}\frac{Z^{(q)}(x)}{Z^{(q)}(y)}\left(W^{(q)\prime}(y-u)-\frac{qW^{(q)}(y)}{Z^{(q)}(y)}W^{(q)}(y-u)\right)\mathbf{1}_{(0,y)}(u)\mathrm{d}y  \mathrm{d}u
\nonumber\\
\hspace{-0.3cm}&=&\hspace{-0.3cm}
\frac{W^{(q)}(0)Z^{(q)}(x)\mathbf{1}_{(x,b)}(u)}{Z^{(q)}(u)}\mathrm{d}u
+Z^{(q)}(x)\int_{x}^{b}\frac{\mathrm{d}}{\mathrm{d}y}\left[\frac{W^{(q)}(y-u)}{Z^{(q)}(y)}\right]\mathbf{1}_{(0,y)}(u)\mathrm{d}y  \mathrm{d}u
\nonumber\\
\hspace{-0.3cm}&=&\hspace{-0.3cm}
\left(\frac{W^{(q)}(0)}{Z^{(q)}(u)}
+\left.\frac{W^{(q)}(y-u)}{Z^{(q)}(y)}\right|_{u}^{b}\right) Z^{(q)}(x)\mathbf{1}_{(x,b)}(u) \mathrm{d}u
+
\left.\frac{W^{(q)}(y-u)}{Z^{(q)}(y)}\right|_{x}^{b} Z^{(q)}(x)\mathbf{1}_{(0,x)}(u) \mathrm{d}u
\nonumber\\
%\hspace{-0.3cm}&=&\hspace{-0.3cm}
%\frac{W^{(q)}(b-u)}{Z^{(q)}(b)} Z^{(q)}(x)\mathbf{1}_{(x,b)}(u) \mathrm{d}u
%+
%\left(\frac{W^{(q)}(b-u)}{Z^{(q)}(b)}-\frac{W^{(q)}(x-u)}{Z^{(q)}(x)}\right) Z^{(q)}(x)\mathbf{1}_{(0,x)}(u) \mathrm{d}u
%\nonumber\\
\hspace{-0.3cm}&=&\hspace{-0.3cm}
\left(\frac{Z^{(q)}(x)}{Z^{(q)}(b)} W^{(q)}(b-u)-W^{(q)}(x-u)\right) \mathbf{1}_{(0,b)}(u) \mathrm{d}u
,\nonumber
\end{eqnarray}
which coincides with (i) of Theorem 1 in Pistorius (2004).
%Here we have used the fact that $W^{(q)}(x)=0 $ for all $x<0$.
\end{rem}

\vspace{0.2cm}
The following result gives an expression of the expectation of the total discounted capital injections until time $\kappa_{b}^{+}$.

\vspace{0.2cm}
\begin{thm}\label{3.3}
For $q>0$ and $x\leq b$, we have
\begin{eqnarray}\label{expr.of.exp.tot.dis.cap.inj.}
V_\xi(x;b)
%\hspace{-0.3cm}&:=&\hspace{-0.3cm}\mathbb{E}_{x}\left(\int_{0}^{\kappa_{b}^{+}}\mathrm{e}^{-q t}\mathrm{d}R(t)\right)\nonumber\\
\hspace{-0.3cm}&=&\hspace{-0.3cm}
\int_{x}^{b}\left(Z^{(q)}(\overline{\xi}(y))-\psi^{\prime}(0+)W^{(q)}(\overline{\xi}(y))\right)
\exp\left(-\int_{x}^{y}\frac{qW^{(q)}(\overline{\xi}(z))}{ Z^{(q)}(\overline{\xi}(z))}\mathrm{d}z\right)\mathrm{d}y
\nonumber\\
\hspace{-0.3cm}&&\hspace{-0.3cm}
+\int_{x}^{b}\exp\left(-\int_{x}^{y}\frac{qW^{(q)}(\overline{\xi}(z))}{ Z^{(q)}(\overline{\xi}(z))}\mathrm{d}z\right)
\frac{qW^{(q)}(\overline{\xi}(y))}{Z^{(q)}(\overline{\xi}(y))}\left(-\overline{Z}^{(q)}(\overline{\xi}(y))+\psi^{\prime}(0+)\overline{W}^{(q)}(\overline{\xi}(y))\right)  \mathrm{d}y.
\end{eqnarray}
\end{thm}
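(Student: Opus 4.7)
The plan is to mirror the integral-equation-plus-ODE strategy used in the proof of Theorem \ref{3.1} (and in a more elaborate form in Theorem \ref{3.2}): decompose $V_\xi(x;b)$ by conditioning at the first draw-down time $\tau_\xi$, apply Lemma \ref{lemma2} to each summand, and differentiate the resulting Volterra equation in $x$ to obtain a first-order linear ODE which is solved under the terminal condition $V_\xi(b;b)=0$. Since no capital injection occurs on $\{\kappa_b^+<\tau_\xi\}$, I would write
\[
V_\xi(x;b) = \mathcal{J}(x)+\mathcal{F}(x)+\mathcal{G}(x),
\]
where $\mathcal{J}(x):=\mathbb{E}_x\bigl(\mathrm{e}^{-q\tau_\xi}(\xi(\overline{X}(\tau_\xi))-X(\tau_\xi));\,\tau_\xi<\tau_b^+\bigr)$ is the expected discounted undershoot injected at the draw-down time (given in closed form by \eqref{14}); $\mathcal{F}(x)$ is the expected discounted capital injection accumulated while $U$ is reflected at $\xi(\overline{X}(\tau_\xi))$ over $[\tau_\xi,T_1)$; and $\mathcal{G}(x)$ collects the discounted contribution from time $T_1$ onwards, which by the strong Markov property of $(U,\overline{U})$ at $T_1$ together with \eqref{two.sid.exit.Y} equals
\[
\mathcal{G}(x)=\mathbb{E}_x\!\left(\frac{\mathrm{e}^{-q\tau_\xi}\mathbf{1}_{\{\tau_\xi<\tau_b^+\}}}{Z^{(q)}(\overline{\xi}(\overline{X}(\tau_\xi)))}\,V_\xi(\overline{X}(\tau_\xi);b)\right).
\]

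The essential new input is the auxiliary identity
\[
\mathbb{E}_0\!\left(\int_0^{\sigma_a^+}\!\mathrm{e}^{-qt}\,\mathrm{d}R^Y_t\right) = \frac{\overline{Z}^{(q)}(a)-\psi^\prime(0+)\overline{W}^{(q)}(a)}{Z^{(q)}(a)},\qquad a>0,
\]
where $R^Y_t=(-\underline{X}_t)\vee 0$ is the cumulative capital injection for the classically reflected process $Y$. This identity can be derived by the same kind of excursion-theoretic compensation argument that underlies the proof of Theorem \ref{3.2}, or equivalently by first computing the joint Laplace transform $\mathbb{E}_0(\exp(-q\sigma_a^+-\theta R^Y_{\sigma_a^+}))$ in the style of Avram et al.\ (2007) and Pistorius (2004) and then differentiating at $\theta=0$. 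With this in hand, applying \eqref{12} with $\phi(s) = (\overline{Z}^{(q)}(\overline{\xi}(s))-\psi^\prime(0+)\overline{W}^{(q)}(\overline{\xi}(s)))/Z^{(q)}(\overline{\xi}(s))$ gives a closed-form integral expression for $\mathcal{F}(x)$, while applying it with $\phi(s)=V_\xi(s;b)/Z^{(q)}(\overline{\xi}(s))$ expresses $\mathcal{G}(x)$ as a Volterra integral in the unknown $V_\xi(\cdot;b)$.

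Summing the three contributions produces an equation of the same shape as \eqref{f(x)}; differentiating in $x$ collapses the $W^{(q)\prime}(\overline{\xi})/W^{(q)}(\overline{\xi})$ factor and leaves the first-order linear ODE
\[
V_\xi^\prime(x;b) = \frac{qW^{(q)}(\overline{\xi}(x))}{Z^{(q)}(\overline{\xi}(x))}\,V_\xi(x;b) - H(x),\qquad V_\xi(b;b)=0,
\]
where $H(x)$ is a specific combination of scale functions coming from $\mathcal{J}$ and $\mathcal{F}$. Solving with the integrating factor $\exp\bigl(-\int_x^b qW^{(q)}(\overline{\xi}(z))/Z^{(q)}(\overline{\xi}(z))\,\mathrm{d}z\bigr)$ and regrouping $H$ into the two pieces $Z^{(q)}(\overline{\xi}(y))-\psi^\prime(0+)W^{(q)}(\overline{\xi}(y))$ and $\frac{qW^{(q)}(\overline{\xi}(y))}{Z^{(q)}(\overline{\xi}(y))}\bigl(-\overline{Z}^{(q)}(\overline{\xi}(y))+\psi^\prime(0+)\overline{W}^{(q)}(\overline{\xi}(y))\bigr)$ recovers precisely the two integrals in \eqref{expr.of.exp.tot.dis.cap.inj.}. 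The main obstacle I expect is establishing the auxiliary identity for the expected discounted capital injection of $Y$, since this is the only ingredient not already proved in the paper; once it is in place, the rest of the argument is a direct adaptation of the ODE derivation of Theorem \ref{3.1}.
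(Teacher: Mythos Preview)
Your proposal is correct and follows essentially the same route as the paper: the decomposition $\mathcal{J}+\mathcal{F}+\mathcal{G}$ is precisely the paper's $V_1+V_2+V_3$, the same formulas \eqref{12} and \eqref{14} from Lemma \ref{lemma2} are invoked, and the reduction to a first-order linear ODE with terminal condition $V_\xi(b;b)=0$ is identical. The auxiliary identity you flag as the main obstacle is not proved in the paper either; it is simply quoted from Theorem~1 of Avram et al.\ (2007) in the algebraically equivalent form $V_0(0;a)=-\psi'(0+)/q+(\overline{Z}^{(q)}(a)+\psi'(0+)/q)/Z^{(q)}(a)$, which rewrites to your expression $(\overline{Z}^{(q)}(a)-\psi'(0+)\overline{W}^{(q)}(a))/Z^{(q)}(a)$ upon using $Z^{(q)}(a)-1=q\,\overline{W}^{(q)}(a)$.
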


\begin{proof}[Proof:]\,\,\,
For $q>0$ and $x\leq b$, we have
\begin{eqnarray}\label{disc.total.cap.}
V_\xi(x;b)\hspace{-0.3cm}&=&\hspace{-0.3cm}\mathbb{E}_{x}\left(\int_{0}^{\kappa_{b}^{+}}\mathrm{e}^{-q t}\mathrm{d}R(t)\right)
=\mathbb{E}_{x}\left(\mathrm{e}^{-q\tau_{\xi}}\mathbf{1}_{\{\tau_{\xi}<\kappa_{b}^{+}\}}\left(\xi(\overline{X}(\tau_{\xi}))-X(\tau_{\xi})\right)\right)
\nonumber\\
\hspace{-0.3cm}&&\hspace{-0.3cm}
+\mathbb{E}_{x}\left(\mathbf{1}_{\{\tau_{\xi}<\kappa_{b}^{+}\}}\int_{\tau_{\xi}+}^{T_{1}}\mathrm{e}^{-q t}\mathrm{d}R(t)\right)
+\mathbb{E}_{x}\left(\mathbf{1}_{\{\tau_{\xi}<\kappa_{b}^{+}\}}\int_{T_{1}}^{\kappa_{b}^{+}}\mathrm{e}^{-q t}\mathrm{d}R(t)\right)
\nonumber\\
\hspace{-0.3cm}&:=&\hspace{-0.3cm}V_{1}(x;b)+V_{2}(x;b)+V_{3}(x;b).
\end{eqnarray}

By \eqref{14}, $V_{1}(x;b)$ can be expressed as
\begin{eqnarray}\label{l1}
V_{1}(x;b)
\hspace{-0.3cm}&=&\hspace{-0.3cm}\int_{x}^{b}
\exp\left(-\int_{x}^{s}\frac{W^{(q)\prime}(\overline{\xi}\left(z\right))}
{W^{(q)}(\overline{\xi}\left(z\right))}\mathrm{d}z\right)
\left(Z^{(q)}(\overline{\xi}(s))-\psi^{\prime}(0+)W^{(q)}(\overline{\xi}(s))\right.
\nonumber\\
\hspace{-0.3cm}&&\hspace{-0.3cm}\hspace{4cm}
\left.-\frac{\overline{Z}^{(q)}(\overline{\xi}(s))-\psi^{\prime}(0+)\overline{W}^{(q)}(\overline{\xi}(s))}{W^{(q)}(\overline{\xi}(s))}W^{(q)\prime}(\overline{\xi}(s))\right)\mathrm{d}s.
\end{eqnarray}
By the Markov property for the reflected process $(U,\overline{U})$,
\begin{eqnarray}
\label{v0(0}
V_{2}(x;b)\hspace{-0.3cm}&=&\hspace{-0.3cm}
%\mathbb{E}_{x}\left(\mathbb{E}_{x}\left(\left.\mathbf{1}_{\{\tau_{\xi}<\kappa_{b}^{+}\}}
%\int_{\tau_{\xi}+}^{T_{1}}\mathrm{e}^{-q t}\mathrm{d}R(t)\right|\mathcal{F}_{\tau_{\xi}}\right)\right)
%\nonumber\\
%\hspace{-0.3cm}&=&\hspace{-0.3cm}
\mathbb{E}_{x}\left(\mathrm{e}^{-q\tau_{\xi}}\mathbf{1}_{\{\tau_{\xi}<\tau_{b}^{+}\}}
V_0(0;\overline{\xi}(\overline{X}(\tau_{\xi})))
\right).
\end{eqnarray}
%By the first two blocks of formulas in
By the proof of Theorem 1 of Avram et al. (2007),  we have
$$V_0(0;\overline{\xi}(\overline{X}(\tau_{\xi})))=-\frac{\psi^{\prime}(0+)}{q}+\frac{\overline{Z}^{(q)}(\overline{\xi}(\overline{X}(\tau_{\xi})))+\frac{\psi^{\prime}(0+)}{q}}{Z^{(q)}(\overline{\xi}(\overline{X}(\tau_{\xi})))},$$
which together with \eqref{v0(0} and \eqref{12} gives
\begin{eqnarray}
\label{l2.}
V_{2}(x;b)\hspace{-0.3cm}&=&\hspace{-0.3cm}
\mathbb{E}_{x}\left(\mathrm{e}^{-q\tau_{\xi}}\mathbf{1}_{\{\tau_{\xi}<\tau_{b}^{+}\}}\left(-\frac{\psi^{\prime}(0+)}{q}+\frac{\overline{Z}^{(q)}(\overline{\xi}(\overline{X}(\tau_{\xi})))+\frac{\psi^{\prime}(0+)}{q}}{Z^{(q)}(\overline{\xi}(\overline{X}(\tau_{\xi})))}\right)\right)
\nonumber\\
\hspace{-0.3cm}&=&\hspace{-0.3cm}
\int_{x}^{b}\left(-\frac{\psi^{\prime}(0+)}{q}+\frac{\overline{Z}^{(q)}(\overline{\xi}(s))+\frac{\psi^{\prime}(0+)}{q}}{Z^{(q)}(\overline{\xi}(s))}\right)
\exp\left(-\int_{x}^{s}\frac{W^{(q)\prime}(\overline{\xi}(z))}{ W^{(q)}(\overline{\xi}(z))}\mathrm{d}z\right)
\nonumber\\
\hspace{-0.3cm}&&\hspace{-0.3cm}\,\,\,\,\,\,\,\times
\left(\frac{W^{(q)\prime}(\overline{\xi}(s))}{W^{(q)}(\overline{\xi}(s))}Z^{(q)}(\overline{\xi}(s))-qW^{(q)}(\overline{\xi}(s))\right)\mathrm{d}s.
\end{eqnarray}

Making use of \eqref{12}  again, one can get
\begin{eqnarray}\label{l3}
V_{3}(x;b)\hspace{-0.3cm}&=&\hspace{-0.3cm}
\mathbb{E}_{x}\left(\mathbb{E}_{x}\left(\left.\mathbf{1}_{\{\tau_{\xi}<\kappa_{b}^{+}\}}\int_{T_{1}}^{\kappa_{b}^{+}}\mathrm{e}^{-q t}\mathrm{d}R(t)\right|\mathcal{F}_{T_{1}}\right)\right)
\nonumber\\
\hspace{-0.3cm}&=&\hspace{-0.3cm}
\mathbb{E}_{x}\left(\mathbb{E}_{x}\left(\left.\mathrm{e}^{-qT_{1}}\mathbf{1}_{\{\tau_{\xi}<\kappa_{b}^{+}\}}V_\xi(\overline{X}(\tau_{\xi});b)\right|\mathcal{F}_{\tau_{\xi}}\right)\right)
\nonumber\\
\hspace{-0.3cm}&=&\hspace{-0.3cm}
\mathbb{E}_{x}\left(\mathrm{e}^{-q\tau_{\xi}}\mathbf{1}_{\{\tau_{\xi}<\tau_{b}^{+}\}}\frac{1}{Z^{(q)}(\overline{\xi}(\overline{X}(\tau_{\xi})))}V_\xi(\overline{X}(\tau_{\xi});b)\right)
\nonumber\\
\hspace{-0.3cm}&=&\hspace{-0.3cm}
\int_{x}^{b}V_\xi(s;b)
\mathrm{e}^{-\int_{x}^{s}\frac{W^{(q)\prime}(\overline{\xi}(z))}{ W^{(q)}(\overline{\xi}(z))}\mathrm{d}z}
\left(\frac{W^{(q)\prime}(\overline{\xi}(s))}{W^{(q)}(\overline{\xi}(s))}-\frac{qW^{(q)}(\overline{\xi}(s))}{Z^{(q)}(\overline{\xi}(s))}\right)\mathrm{d}s.
\end{eqnarray}

Denote by $V_{\xi}^{\prime}(x;b)$ the derivative of $V_{\xi}(x;b)$ with respect to its first argument. Combining (\ref{disc.total.cap.}), (\ref{l1}), (\ref{l2.}) and (\ref{l3}) we have
\begin{eqnarray}\label{l'}
V_{\xi}^{\prime}(x;b)\hspace{-0.3cm}&=&\hspace{-0.3cm}\frac{W^{(q)\prime}(\overline{\xi}(x))}{ W^{(q)}(\overline{\xi}(x))}V_\xi(x;b)
-V_\xi(x;b)
\left(\frac{W^{(q)\prime}(\overline{\xi}(x))}{W^{(q)}(\overline{\xi}(x))}-\frac{qW^{(q)}(\overline{\xi}(x))}{Z^{(q)}(\overline{\xi}(x))}\right)
\nonumber\\
\hspace{-0.3cm}&&\hspace{-0.3cm}
-\left(Z^{(q)}(\overline{\xi}(x))-\psi^{\prime}(0+)W^{(q)}(\overline{\xi}(x))-\frac{\overline{Z}^{(q)}(\overline{\xi}(x))-\psi^{\prime}(0+)\overline{W}^{(q)}(\overline{\xi}(x))}{W^{(q)}(\overline{\xi}(x))}W^{(q)\prime}(\overline{\xi}(x))\right)
\nonumber\\
\hspace{-0.3cm}&&\hspace{-0.3cm}
-\left(-\frac{\psi^{\prime}(0+)}{q}+\frac{\overline{Z}^{(q)}(\overline{\xi}(x))+\frac{\psi^{\prime}(0+)}{q}}{Z^{(q)}(\overline{\xi}(x))}\right)\left(\frac{W^{(q)\prime}(\overline{\xi}(x))}{W^{(q)}(\overline{\xi}(x))}Z^{(q)}(\overline{\xi}(x))-qW^{(q)}(\overline{\xi}(x))\right)
\nonumber\\
\hspace{-0.3cm}&=&\hspace{-0.3cm}
\frac{qW^{(q)}(\overline{\xi}(x))}{Z^{(q)}(\overline{\xi}(x))}V_\xi(x;b)
\nonumber\\
\hspace{-0.3cm}&&\hspace{-0.3cm}
-Z^{(q)}(\overline{\xi}(x))+\psi^{\prime}(0+)W^{(q)}(\overline{\xi}(x))+\frac{\overline{Z}^{(q)}(\overline{\xi}(x))-\psi^{\prime}(0+)\overline{W}^{(q)}(\overline{\xi}(x))}{W^{(q)}(\overline{\xi}(x))}W^{(q)\prime}(\overline{\xi}(x))
\nonumber\\
\hspace{-0.3cm}&&\hspace{-0.3cm}
+\frac{-\overline{Z}^{(q)}(\overline{\xi}(x))+\psi^{\prime}(0+)\overline{W}^{(q)}(\overline{\xi}(x))}{Z^{(q)}(\overline{\xi}(x))}\left(\frac{W^{(q)\prime}(\overline{\xi}(x))}{W^{(q)}(\overline{\xi}(x))}Z^{(q)}(\overline{\xi}(x))-qW^{(q)}(\overline{\xi}(x))\right)
\nonumber\\
\hspace{-0.3cm}&=&\hspace{-0.3cm}
\frac{qW^{(q)}(\overline{\xi}(x))}{Z^{(q)}(\overline{\xi}(x))}V_\xi(x;b)
-Z^{(q)}(\overline{\xi}(x))+\psi^{\prime}(0+)W^{(q)}(\overline{\xi}(x))
\nonumber\\
\hspace{-0.3cm}&&\hspace{-0.3cm}
-\frac{qW^{(q)}(\overline{\xi}(x))}{Z^{(q)}(\overline{\xi}(x))}\left(-\overline{Z}^{(q)}(\overline{\xi}(x))+\psi^{\prime}(0+)\overline{W}^{(q)}(\overline{\xi}(x))\right)
.
\end{eqnarray}
Solving (\ref{l'}) with boundary condition $V_\xi(b, b)=0 $,  we obtain (\ref{expr.of.exp.tot.dis.cap.inj.}).
\end{proof}

\vspace{0.2cm}
\begin{rem}
In particular, for $\xi\equiv0$ we have
\begin{eqnarray}
V_0(x;b)\hspace{-0.3cm}&
=&\hspace{-0.3cm}\int_{x}^{b}\left(Z^{(q)}(y)-\psi^{\prime}(0+)W^{(q)}(y)\right)
\exp\left(-\int_{x}^{y}\frac{qW^{(q)}(z)}{ Z^{(q)}(z)}\mathrm{d}z\right)\mathrm{d}y
\nonumber\\
\hspace{-0.3cm}&&\hspace{-0.3cm}
+\int_{x}^{b}\exp\left(-\int_{x}^{y}\frac{qW^{(q)}(z)}{ Z^{(q)}(z)}\mathrm{d}z\right)
\frac{qW^{(q)}(y)}{Z^{(q)}(y)}\left(-\overline{Z}^{(q)}(y)+\psi^{\prime}(0+)\overline{W}^{(q)}(y)\right)  \mathrm{d}y
\nonumber\\
\hspace{-0.3cm}&
=&\hspace{-0.3cm}\int_{x}^{b}\left(Z^{(q)}(y)-\psi^{\prime}(0+)W^{(q)}(y)\right)\frac{Z^{(q)}(x)}{ Z^{(q)}(y)}\mathrm{d}y
\nonumber\\
\hspace{-0.3cm}&&\hspace{-0.3cm}
+\int_{x}^{b}\frac{Z^{(q)}(x)}{ Z^{(q)}(y)}
\frac{qW^{(q)}(y)}{Z^{(q)}(y)}\left(-\overline{Z}^{(q)}(y)+\psi^{\prime}(0+)\overline{W}^{(q)}(y)\right)  \mathrm{d}y
\nonumber\\
%\hspace{-0.3cm}&
%=&\hspace{-0.3cm}\int_{x}^{b}\left(Z^{(q)}(y)-\psi^{\prime}(0+)W^{(q)}(y)\right)\frac{Z^{(q)}(x)}{ Z^{(q)}(y)}\mathrm{d}y
%\nonumber\\
%\hspace{-0.3cm}&&\hspace{-0.3cm}
%+Z^{(q)}(x)\int_{x}^{b}\left(\overline{Z}^{(q)}(y)-\psi^{\prime}(0+)\overline{W}^{(q)}(y)\right)
%\frac{\mathrm{d}}{\mathrm{d}y}\left[\frac{1}{ Z^{(q)}(y)}\right]  \mathrm{d}y
%\nonumber\\
\hspace{-0.3cm}&=&\hspace{-0.3cm}
-Z^{(q)}(x)\left.\frac{-\overline{Z}^{(q)}(y)+\psi^{\prime}(0+)\overline{W}^{(q)}(y)}{ Z^{(q)}(y)}\right|_{y=x}^{y=b}
\nonumber\\
\hspace{-0.3cm}&=&\hspace{-0.3cm}
-\overline{Z}^{(q)}(x)+\psi^{\prime}(0+)\overline{W}^{(q)}(x)+\frac{Z^{(q)}(x)}{ Z^{(q)}(b)}\left(\overline{Z}^{(q)}(b)-\psi^{\prime}(0+)\overline{W}^{(q)}(b)\right)
\nonumber\\
%\hspace{-0.3cm}&=&\hspace{-0.3cm}
%-\overline{Z}^{(q)}(x)-\frac{\psi^{\prime}(0+)}{q}+\frac{\psi^{\prime}(0+)}{q}Z^{(q)}(x)+\frac{Z^{(q)}(x)}{ %Z^{(q)}(b)}\left(\overline{Z}^{(q)}(b)+\frac{\psi^{\prime}(0+)}{q}-\frac{\psi^{\prime}(0+)}{q}Z^{(q)}(b)\right)
%\nonumber\\
\hspace{-0.3cm}&=&\hspace{-0.3cm}
-\overline{Z}^{(q)}(x)-\frac{\psi^{\prime}(0+)}{q}+\frac{Z^{(q)}(x)}{ Z^{(q)}(b)}\left(\overline{Z}^{(q)}(b)+\frac{\psi^{\prime}(0+)}{q}\right),\nonumber
\end{eqnarray}
which coincides with the corresponding results (the first block of equations) on page 167 of Avram et al. (2007).
\end{rem}

\vspace{0.2cm}
\begin{rem}
Letting $b\rightarrow\infty$ in (\ref{expr.of.exp.tot.dis.cap.inj.}), we recover the following expression of the expected total discounted capital injections.
\begin{eqnarray}\label{expr.of.exp.tot.dis.cap.inj.v11}
V_\xi(x;\infty)\hspace{-0.3cm}&
=&\hspace{-0.3cm}\int_{x}^{\infty}\left(Z^{(q)}(\overline{\xi}(y))-\psi^{\prime}(0+)W^{(q)}(\overline{\xi}(y))\right)
\exp\left(-\int_{x}^{y}\frac{qW^{(q)}(\overline{\xi}(z))}{ Z^{(q)}(\overline{\xi}(z))}\mathrm{d}z\right)\mathrm{d}y
\nonumber\\
\hspace{-0.3cm}&&\hspace{-0.3cm}
+\int_{x}^{\infty}\exp\left(-\int_{x}^{y}\frac{qW^{(q)}(\overline{\xi}(z))}{ Z^{(q)}(\overline{\xi}(z))}\mathrm{d}z\right)
\frac{qW^{(q)}(\overline{\xi}(y))}{Z^{(q)}(\overline{\xi}(y))}\left(-\overline{Z}^{(q)}(\overline{\xi}(y))+\psi^{\prime}(0+)\overline{W}^{(q)}(\overline{\xi}(y))\right)  \mathrm{d}y.\nonumber
\end{eqnarray}
Letting $\xi(x)\equiv0$ in the above equality, we get
\begin{eqnarray}\label{}
V_0(x;\infty)
\hspace{-0.3cm}&
=&\hspace{-0.3cm}
-\overline{Z}^{(q)}(x)-\frac{\psi^{\prime}(0+)}{q}+\lim\limits_{b\rightarrow\infty}\frac{Z^{(q)}(x)}{ Z^{(q)}(b)}\left(\overline{Z}^{(q)}(b)+\frac{\psi^{\prime}(0+)}{q}\right)
\nonumber\\
\hspace{-0.3cm}&
=&\hspace{-0.3cm}
-\overline{Z}^{(q)}(x)-\frac{\psi^{\prime}(0+)}{q}+Z^{(q)}(x)\lim\limits_{b\rightarrow\infty}\frac{W^{(q)}(b)}{ W^{(q)\prime}(b)}
\nonumber\\
\hspace{-0.3cm}&
=&\hspace{-0.3cm}
-\overline{Z}^{(q)}(x)-\frac{\psi^{\prime}(0+)}{q}+\frac{Z^{(q)}(x)}{\Phi(q)},\nonumber
\end{eqnarray}
which coincides with the results obtained by letting $a\rightarrow\infty$ in (4.4) of Avram et al. (2007).
\end{rem}

\vspace{0.2cm}
The next result gives an expression of the Laplace transform of the accumulated capital injections until time $\kappa_{b}^{+}$.

\vspace{0.2cm}
\begin{thm}\label{3.4}
For any $\theta>0$ and $x\leq b$, we have
\begin{eqnarray}\label{lap.of.exp.tot.dis.cap.inj.}
\overline{V}_\xi(x;b)
\hspace{-0.3cm}&
=&\hspace{-0.3cm}
\exp\left(-\int_{x}^{b}\left(\theta-\frac{\psi(\theta)W(\overline{\xi}(y))}{Z(\overline{\xi}(y),\theta)}\right)
\mathrm{d}y\right).
\end{eqnarray}
\end{thm}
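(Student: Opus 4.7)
The plan is to mirror the proofs of Theorems \ref{3.1} and \ref{3.3}: derive an integral equation for $h(x):=\overline{V}_\xi(x;b)$ by decomposing the expectation, then convert it into a first-order linear ODE pinned down by a boundary condition at $b$. The starting split is
\begin{equation*}
h(x)=\bE_x\!\left(\mathrm{e}^{-\theta R(\kappa_b^+)};\,\kappa_b^+<\tau_\xi\right)+\bE_x\!\left(\mathrm{e}^{-\theta R(\kappa_b^+)};\,\tau_\xi<\kappa_b^+\right).
\end{equation*}
On $\{\kappa_b^+<\tau_\xi\}=\{\tau_b^+<\tau_\xi\}$ no capital is injected and $\mathrm{e}^{-\theta R(\kappa_b^+)}=1$, so by \eqref{part1} with $q=0$ the first term equals $\exp(-\int_x^b W^{\prime}(\overline{\xi}(z))/W(\overline{\xi}(z))\,\mathrm{d}z)$.

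For the second summand, on $\{\tau_\xi<\kappa_b^+\}$ we have $\tau_\xi<T_1<\kappa_b^+$, and the first-excursion capital injection equals $R_1:=\xi(\overline{X}(\tau_\xi))-\inf_{\tau_\xi\le s\le T_1}X(s)$. The strong Markov property at $T_1$ (the process $(U,\overline{U})$ restarts at $(\overline{X}(\tau_\xi),\overline{X}(\tau_\xi))$) gives
\begin{equation*}
\bE_x\!\left(\mathrm{e}^{-\theta R(\kappa_b^+)};\tau_\xi<\kappa_b^+\right)=\bE_x\!\left(\mathrm{e}^{-\theta R_1}\,h(\overline{X}(\tau_\xi));\,\tau_\xi<\tau_b^+\right).
\end{equation*}
A second strong Markov step at $\tau_\xi$, after shifting so that the draw-down level $\xi(\overline{X}(\tau_\xi))$ sits at $0$, identifies $R_1$ as the overshoot $\xi(\overline{X}(\tau_\xi))-X(\tau_\xi)>0$ (paid in as initial injection) plus the total local time at $0$ of the ordinary reflected process $Y$ started at $0$ and run until it first crosses the level $\overline{\xi}(\overline{X}(\tau_\xi))$. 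Invoking the classical identity $\bE_0(\mathrm{e}^{-\theta R^{Y}_{\sigma_c^+}})=1/Z(c,\theta)$ (Avram, Palmowski and Pistorius (2007); equivalently, the $\xi\equiv 0$ specialization of the target formula), I obtain
\begin{equation*}
\bE_x\!\left[\mathrm{e}^{-\theta R_1}\,\big|\,\mathcal{F}_{\tau_\xi}\right]=\frac{\mathrm{e}^{\theta(X(\tau_\xi)-\xi(\overline{X}(\tau_\xi)))}}{Z(\overline{\xi}(\overline{X}(\tau_\xi)),\theta)}.
\end{equation*}

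Plugging this back puts the second summand into the form $\bE_x(\mathrm{e}^{\theta X(\tau_\xi)}\phi(\overline{X}(\tau_\xi));\tau_\xi<\tau_b^+)$ with $\phi(s):=h(s)\,\mathrm{e}^{-\theta\xi(s)}/Z(\overline{\xi}(s),\theta)$, which is precisely the setting of formula \eqref{13} in Lemma \ref{lemma2}. Adding the two pieces produces a Volterra-type integral equation for $h$; differentiating in $x$ cancels the $W^{\prime}/W$-terms and leaves the linear ODE
\begin{equation*}
h^{\prime}(x)=h(x)\left(\theta-\frac{\psi(\theta)\,W(\overline{\xi}(x))}{Z(\overline{\xi}(x),\theta)}\right),
\end{equation*}
which, together with the terminal condition $h(b)=\overline{V}_\xi(b;b)=1$, integrates to \eqref{lap.of.exp.tot.dis.cap.inj.}. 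The main obstacle I foresee is the conditioning step: correctly decomposing $R_1$ into the overshoot plus the total local time of a standard reflected-at-$0$ process, and applying the Laplace-transform identity $\bE_0(\mathrm{e}^{-\theta R^{Y}})=1/Z(\cdot,\theta)$. Once this reduction is justified---either by direct reference to Avram, Palmowski and Pistorius (2007) or by a short excursion-theoretic computation analogous to the one used to obtain $V_0(0;c)$ in the proof of Theorem \ref{3.3}---the remaining work is a mechanical ODE manipulation essentially identical to the closing argument of the proof of Theorem \ref{3.1}.
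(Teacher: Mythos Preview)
Your proposal is correct and follows essentially the same route as the paper: the same split at $\tau_\xi$, the same two-stage strong Markov reduction at $T_1$ and $\tau_\xi$ yielding the factor $\mathrm{e}^{\theta(X(\tau_\xi)-\xi(\overline X(\tau_\xi)))}/Z(\overline\xi(\overline X(\tau_\xi)),\theta)$, the same application of \eqref{13}, and the same ODE argument with boundary value $1$ at $b$. The only cosmetic difference is that the paper cites equation (24) of Albrecher, Ivanovs and Zhou (2016) for the identity $\overline V_0(0;c)=1/Z(c,\theta)$ rather than Avram, Palmowski and Pistorius (2007).
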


\begin{proof}[Proof:]\,\,\,
For $q>0$ and $x\leq b$, we have
\begin{eqnarray}\label{add.1}
\hspace{-0.3cm}&&\hspace{-0.3cm}\overline{V}_\xi(x;b):=
\mathbb{E}_{x}\left(\mathrm{e}^{-\theta R(\kappa_{b}^{+})}\right)
\nonumber\\
%\hspace{-0.3cm}&=&\hspace{-0.3cm}
%\mathbb{E}_{x}\left(\mathrm{e}^{-\theta R(\kappa_{b}^{+})};\tau_{\xi}<\kappa_{b}^{+}\right)
%+\mathbb{E}_{x}\left(\mathrm{e}^{-\theta R(\kappa_{b}^{+})};\tau_{\xi}>\kappa_{b}^{+}\right)
%\nonumber\\
\hspace{-0.3cm}&=&\hspace{-0.3cm}
\mathbb{E}_{x}\left(\mathrm{e}^{-\theta\left(\xi(\overline{X}(\tau_{\xi}))-X(\tau_{\xi})\right)
-\theta\left(R(T_{1})-R(\tau_{\xi})\right)
-\theta\left(R(\kappa_{b}^{+})-R(T_{1})\right)
};\tau_{\xi}<\kappa_{b}^{+}
\right)
+
\mathbb{P}_{x}\left(\tau_{\xi}>\tau_{b}^{+}\right)
\nonumber\\
\hspace{-0.3cm}&=&\hspace{-0.3cm}
\mathbb{E}_{x}\left(\mathrm{e}^{-\theta\left(\xi(\overline{X}(\tau_{\xi}))-X(\tau_{\xi})\right)}
\mathrm{e}^{
-\theta\left(R(T_{1})-R(\tau_{\xi})\right)}
\mathbb{E}_{x}\left(\left.\mathrm{e}^{
-\theta\left(R(\kappa_{b}^{+})-R(T_{1})\right)
}\right|\mathcal{F}_{T_{1}}
\right)
;\tau_{\xi}<\kappa_{b}^{+}\right)
\nonumber\\
\hspace{-0.3cm}&&\hspace{-0.3cm}
+
\exp\left(-\int_{x}^{b}\frac{W^{\prime}(\overline{\xi}\left(z\right))}
{W(\overline{\xi}\left(z\right))}\mathrm{d}z\right)
\nonumber\\
%\hspace{-0.3cm}&=&\hspace{-0.3cm}
%\mathbb{E}_{x}\left(\mathrm{e}^{-\theta\left(\xi(\overline{X}(\tau_{\xi}))-X(\tau_{\xi})\right)}
%\overline{V_\xi}\left(\overline{X}(\tau_{\xi}); b\right)
%\mathbb{E}_{x}\left(\left.\mathrm{e}^{
%-\theta\left(R(T_{1})-R(\tau_{\xi})\right)}
%\right|\mathcal{F}_{\tau_{\xi}}\right)
%;\tau_{\xi}<\kappa_{b}^{+}\right)
%\nonumber\\
%\hspace{-0.3cm}&&\hspace{-0.3cm}
%+
%\exp\left\{-\int_{x}^{b}\frac{W^{\prime}(\overline{\xi}\left(z\right))}
%{W(\overline{\xi}\left(z\right))}\mathrm{d}z\right\}
%\nonumber\\
\hspace{-0.3cm}&=&\hspace{-0.3cm}
\mathbb{E}_{x}\left(\mathrm{e}^{-\theta\left(\xi(\overline{X}(\tau_{\xi}))-X(\tau_{\xi})\right)}
\overline{V}_\xi\left(\overline{X}(\tau_{\xi}),b\right)
\overline{V}_0 (0; \overline{\xi}(\overline{X}(\tau_{\xi})))
;\tau_{\xi}<\tau_{b}^{+}\right)
\nonumber\\
\hspace{-0.3cm}&&\hspace{-0.3cm}
+
\exp\left(-\int_{x}^{b}\frac{W^{\prime}(\overline{\xi}\left(z\right))}
{W(\overline{\xi}\left(z\right))}\mathrm{d}z\right).
\end{eqnarray}
By (24) of Albrecher et al. (2016), we have
\begin{eqnarray}\label{add.2}
\overline{V}_0 (0; \overline{\xi}(\overline{X}(\tau_{\xi})))
\hspace{-0.3cm}&=&\hspace{-0.3cm}
\left.\mathbb{E}_{}\left(\mathrm{e}^{-\theta R_{0}(\sigma_{z}^{+})}\right)\right|_{z=\overline{\xi}(\overline{X}(\tau_{\xi}))}
\nonumber\\
\hspace{-0.3cm}&=&\hspace{-0.3cm}
\left.\mathbb{E}_{}\left(\mathrm{e}^{-\theta R_{0}(\sigma_{z}^{+})};\sigma_{z}^{+}<\infty\right)\right|_{z=\overline{\xi}(\overline{X}(\tau_{\xi}))}
\nonumber\\
\hspace{-0.3cm}&=&\hspace{-0.3cm}
\frac{Z(0,\theta)}{Z(\overline{\xi}(\overline{X}(\tau_{\xi})),\theta)}
\nonumber\\
\hspace{-0.3cm}&=&\hspace{-0.3cm}
\frac{1}{Z(\overline{\xi}(\overline{X}(\tau_{\xi})),\theta)},
\end{eqnarray}
where $R_{0}(t):=-\underline{X}(t)\wedge 0$, and for the second equality of \eqref{add.2} we need the fact that if $\psi^{\prime}(0+)\geq0$, then $\lim\limits_{t\rightarrow\infty}\overline{X}(t)=\infty$, and  we have \[\lim\limits_{t\rightarrow\infty}\sup_{s\in[0,t]}Y(s)\geq\lim\limits_{t\rightarrow\infty}\overline{X}(t)=\infty,\]
which implies $\sigma_{z}^{+}<\infty$ \,\, $\mathbb{P}_{x}$-a.s.; if $\psi^{\prime}(0+)<0$ and $\sigma_{z}^{+}=\infty$ a.s., then we  have
$R_{0}(\sigma_{z}^{+})=\infty$ because $\lim\limits_{t\rightarrow\infty}\underline{X}(t)=-\infty$ when $\psi^{\prime}(0+)<0$. That is to say, either  $\psi^{\prime}(0+)\geq0$ or $\psi^{\prime}(0+)<0$, we always have
\[\mathbb{E}_{x}\left(\mathrm{e}^{-\theta R_{0}(\sigma_{z}^{+})}\right)=
\mathbb{E}_{x}\left(\mathrm{e}^{-\theta R_{0}(\sigma_{z}^{+})};\sigma_{z}^{+}<\infty\right).\]

Using \eqref{13}, we have
\begin{eqnarray}\label{add.3}
\hspace{-0.3cm}&&\hspace{-0.3cm}
\mathbb{E}_{x}\left(\mathrm{e}^{\theta X(\tau_{\xi})}
\frac{\mathrm{e}^{-\theta\xi(\overline{X}(\tau_{\xi}))}\overline{V}_\xi\left(\overline{X}(\tau_{\xi}),b\right)}
{Z(\overline{\xi}(\overline{X}(\tau_{\xi})),\theta)}
;\tau_{\xi}<\tau_{b}^{+}\right)
\nonumber\\
\hspace{-0.3cm}&=&\hspace{-0.3cm}\int_{x}^{b}
\frac{\overline{V}_\xi\left(s,b\right)}
{Z(\overline{\xi}(s),\theta)}\,
\mathrm{e}^{-\int_{x}^{s}\frac{W^{\prime}(\overline{\xi}\left(z\right))}
{W(\overline{\xi}\left(z\right))}\mathrm{d}z}
\left(\frac{W^{\prime}(\overline{\xi}(s))}{W(\overline{\xi}(s))}Z(\overline{\xi}(s),\theta)-\theta Z(\overline{\xi}(s),\theta)+\psi(\theta)W(\overline{\xi}(s))\right)
\mathrm{d}s.
\end{eqnarray}
Applying both (\ref{add.2}) and (\ref{add.3}) to (\ref{add.1}), we have
\begin{eqnarray}\label{add.4}
\overline{V}_\xi(x;b)
\hspace{-0.3cm}&=&\hspace{-0.3cm}
\exp\left(-\int_{x}^{b}\frac{W^{\prime}(\overline{\xi}\left(z\right))}
{W(\overline{\xi}\left(z\right))}\mathrm{d}z\right)
+
\mathbb{E}_{x}\left(\mathrm{e}^{\theta X(\tau_{\xi})}
\frac{\mathrm{e}^{-\theta\xi(\overline{X}(\tau_{\xi}))}\overline{V}_\xi\left(\overline{X}(\tau_{\xi}),b\right)}
{Z(\overline{\xi}(\overline{X}(\tau_{\xi})),\theta)}
;\tau_{\xi}<\tau_{b}^{+}\right)
\nonumber\\
\hspace{-0.3cm}&=&\hspace{-0.3cm}
\exp\left(-\int_{x}^{b}\frac{W^{\prime}(\overline{\xi}\left(z\right))}
{W(\overline{\xi}\left(z\right))}\mathrm{d}z\right)
\nonumber\\
\hspace{-0.3cm}&&\hspace{-0.7cm}
+\int_{x}^{b}
\frac{\overline{V_\xi}\left(s; b\right)}
{Z(\overline{\xi}(s),\theta)}\,
\mathrm{e}^{-\int_{x}^{s}\frac{W^{\prime}(\overline{\xi}\left(z\right))}
{W(\overline{\xi}\left(z\right))}\mathrm{d}z}
\left(\frac{W^{\prime}(\overline{\xi}(s))}{W(\overline{\xi}(s))}Z(\overline{\xi}(s),\theta)-\theta Z(\overline{\xi}(s),\theta)+\psi(\theta)W(\overline{\xi}(s))\right)
\mathrm{d}s.
\end{eqnarray}
Taking derivatives on  both sides of (\ref{add.4}) with respect to $x$, we have
\begin{eqnarray}\label{add.5}
\overline{V}_\xi^{\prime}(x;b)
\hspace{-0.3cm}&=&\hspace{-0.3cm}
\frac{W^{\prime}(\overline{\xi}\left(x\right))}
{W(\overline{\xi}\left(x\right))}\overline{V}_\xi(x;b)
\nonumber\\
\hspace{-0.3cm}&&\hspace{-0.3cm}
-
\frac{\overline{V}_\xi\left(x; b\right)}
{Z(\overline{\xi}(x),\theta)}
\left(\frac{W^{\prime}(\overline{\xi}(x))}{W(\overline{\xi}(x))}Z(\overline{\xi}(x),\theta)-\theta Z(\overline{\xi}(x),\theta)+\psi(\theta)W(\overline{\xi}(x))\right)
\nonumber\\
\hspace{-0.3cm}&=&\hspace{-0.3cm}
\left(\theta-\frac{\psi(\theta)W(\overline{\xi}(x))}{Z(\overline{\xi}(x),\theta)}\right)
\overline{V}_\xi(x;b)
.
\end{eqnarray}
Solving (\ref{add.5}) with boundary condition $\overline{V}(b;b)=1$ we obtain (\ref{lap.of.exp.tot.dis.cap.inj.}).
\end{proof}

\vspace{0.2cm}
\begin{rem}
Letting $\xi(x)\equiv0$ in (\ref{lap.of.exp.tot.dis.cap.inj.}), we get
\begin{eqnarray}\label{expr.of.exp.tot.dis.cap.inj.1}
\overline{V}_0(x;b)
\hspace{-0.3cm}&
=&\hspace{-0.3cm}
\exp\left(-\int_{x}^{b}
\left(\theta-\frac{\psi(\theta)W(y)}{Z(y,\theta)}\right)
\mathrm{d}y\right)
\nonumber\\
\hspace{-0.3cm}&
=&\hspace{-0.3cm}
\exp\left(-\int_{x}^{b}
\frac{Z^{\prime}(y,\theta)}{Z(y,\theta)}
\mathrm{d}y\right)
=
\frac{Z(x,\theta)}{Z(b,\theta)},
\end{eqnarray}
which recovers  (24) of Albrecher et al. (2016).
\end{rem}

\vspace{0.2cm}
\begin{rem}
By Theorem \ref{3.4}, one can deduce that,
for $q>0$ and $x\leq b$
\begin{eqnarray}\label{lap.of.exp.tot.dis.cap.inj.1}
\mathbb{E}_{x}\left(\mathrm{e}^{-q\kappa_{b}^{+}-\theta R(\kappa_{b}^{+})}\right)
\hspace{-0.3cm}&
=&\hspace{-0.3cm}
\mathbb{E}_{x}\left(\mathrm{e}^{-q\kappa_{b}^{+}-\theta R(\kappa_{b}^{+})};\kappa_{b}^{+}<\infty\right)
\nonumber\\
\hspace{-0.3cm}&
=&\hspace{-0.3cm}
\mathbb{E}_{x}\left(\mathrm{e}^{-\theta R(\kappa_{b}^{+})};\kappa_{b}^{+}<\infty, \kappa_{b}^{+}<e_{q}\right)
\nonumber\\
\hspace{-0.3cm}&
=&\hspace{-0.3cm}
\exp\left(-\int_{x}^{b}\left(\theta-\frac{\psi_{q}(\theta)W^{(q)}(\overline{\xi}(y))}{Z^{(q)}(\overline{\xi}(y),\theta)}\right)\mathrm{d}y\right),\nonumber
\end{eqnarray}
where $e_{q}$ is an exponential random variable with rate $q$ independent of $X$, and $\psi_{q}(\theta)=\psi(\theta)-q$, $W^{(q)}$ and $Z^{(q)}$ correspond to the Laplace exponent and scale functions of the L\'{e}vy process killed at rate $q$.
\end{rem}

\vspace{0.2cm}
\begin{rem}
It is easy to see that Theorem \ref{3.3} and Theorem \ref{3.4} agree with each other in some special cases.
%We assume that all expectations and limits in this remark are well-defined and finite.
By Theorem  \ref{3.3} we have
\begin{eqnarray}\label{60}
\mathbb{E}_{x}\left(R(\kappa_{b}^{+})\right)=
\mathbb{E}_{x}\left(\int_{0}^{\kappa_{b}^{+}}\mathrm{d}R(t)\right)=\lim\limits_{q\downarrow0}V_{\xi}(x; b)\hspace{-0.3cm}&
=&\hspace{-0.3cm}\int_{x}^{b}\left(1-\psi^{\prime}(0+)W(\overline{\xi}(y))\right)
\mathrm{d}y,
\end{eqnarray}
where $\lim\limits_{q\downarrow0}Z^{(q)}(x)=1$ is used.
In the meanwhile, it holds that
\begin{eqnarray}\label{61}
\mathbb{E}_{x}\left(-R(\kappa_{b}^{+})\right)=
\lim\limits_{\theta\downarrow0}\frac{\mathrm{d}}{\mathrm{d}\theta}
\overline{V}_{\xi}(x;b).
\end{eqnarray}
By Theorem  \ref{3.4} we have
\begin{eqnarray}\label{62}
\frac{\mathrm{d}}{\mathrm{d}\theta}
\overline{V}_{\xi}(x;b)\hspace{-0.3cm}&
=&\hspace{-0.3cm}
\exp\left(-\int_{x}^{b}\left(\theta-\frac{\psi(\theta)W(\overline{\xi}(y))}{Z(\overline{\xi}(y),\theta)}\right)\mathrm{d}y\right)
\nonumber\\
\hspace{-0.3cm}&
&\hspace{-0.3cm}
\times
\int_{x}^{b}-
\left(1-\frac{\psi^{\prime}(\theta)W(\overline{\xi}(y))}{Z(\overline{\xi}(y),\theta)}
+\frac{\psi(\theta)W(\overline{\xi}(y))}{[Z(\overline{\xi}(y),\theta)]^{2}}
\frac{\mathrm{d}}{\mathrm{d}\theta}Z(\overline{\xi}(y),\theta)\right)\mathrm{d}y,\nonumber
\end{eqnarray}
with
$$
\frac{\mathrm{d}}{\mathrm{d}\theta}Z(x,\theta)=xZ(x,\theta)-\mathrm{e}^{\theta x}\psi^{\prime}(\theta)\int_{0}^{x}\mathrm{e}^{-\theta z}W(z)\mathrm{d}z
+\mathrm{e}^{\theta x}\psi(\theta)\int_{0}^{x}z\mathrm{e}^{-\theta z}W(z)\mathrm{d}z,
$$
by the definition of $Z(x,\theta)$.
Hence,
\begin{eqnarray}\label{63}
\lim\limits_{\theta\downarrow0}\frac{\mathrm{d}}{\mathrm{d}\theta}Z(x,\theta)
=x-\psi^{\prime}(0+)\int_{0}^{x}W(z)\mathrm{d}z,\nonumber
\end{eqnarray}
and
\begin{eqnarray}\label{64}
\lim\limits_{\theta\downarrow0}\frac{\mathrm{d}}{\mathrm{d}\theta}
\overline{V}_{\xi}(x;b)\hspace{-0.3cm}&
=&\hspace{-0.3cm}
-\int_{x}^{b}
\left(1-\psi^{\prime}(0+)W(\overline{\xi}(y))\right)\mathrm{d}y.
\end{eqnarray}
Plugging (\ref{64}) into (\ref{61}) we recover (\ref{60}).

%We have still to argue that the integrand of the right hand side of (\ref{60}) is nonnegative, otherwise contradiction may happen in (\ref{60}). In fact, we only need to consider the case $\psi^{\prime}(0+)>0$. For the case that the L\'{e}vy process has a completely monotone jump density,
%it is seen from Theorem 2 in Loeffen (2009) that
%$$W(x)=\frac{1}{\psi^{\prime}(0+)}-\int_{0}^{\infty}\mathrm{e}^{-xy}\mu(\mathrm{d}y),\quad x>0,$$
%for some Borel measure $\mu$. With the above representation for $W(x)$, one can see that
%$$1-\psi^{\prime}(0+)W(\overline{\xi}(x))=\psi^{\prime}(0+)\int_{0}^{\infty}\mathrm{e}^{-\overline{\xi}(x)y}\mu(\mathrm{d}y)\geq0,\quad x\geq0.$$
%For the general case, by (22) of page 111 in Kuznetsov et al. (2012) we know that when $\psi^{\prime}(0+)>0$
%$$W(x)=\frac{1}{\psi^{\prime}(0+)}\mathbb{P}_{x}\left(\underline{X}_{\infty}\geq 0\right),\quad x>0,$$
%which also leads to $1-\psi^{\prime}(0+)W(\overline{\xi}(x))=1-\mathbb{P}_{\overline{\xi}(x)}\left(\underline{X}_{\infty}\geq 0\right)\geq0$, $x\geq0$.
\end{rem}

\vspace{1cm}
\section*{Acknowledgements}
The authors are grateful to an anonymous referee  for very helpful comments.
Wenyuan Wang acknowledges the financial support from the National Natural Science Foundation of China (No.11601197) and the Program for New Century Excellent Talents in Fujian Province University. He also thanks Concordia University where this paper was finished during his visit.
Xiaowen Zhou acknowledges the financial support from  NSERC (RGPIN-2016-06704) and National Natural Science Foundation of China (No.11771018) .

\end{document}